\newcommand{\R}{\mathbb{R}}
\newcommand{\N}{\mathbb{N}}
\newcommand{\No}{\mathbb{N}_0}
\numberwithin{equation}{section}
\theoremstyle{plain}
\newtheorem{thm}[equation]{Theorem}
\newtheorem{lem}[equation]{Lemma}
\newtheorem{prop}[equation]{Proposition}
\newtheorem{cor}[equation]{Corollary}
\theoremstyle{definition}
\newtheorem{defn}[equation]{Definition}
\theoremstyle{remark}
\newtheorem{rem}[equation]{Remark}
\newcommand{\pair}[2]{\langle #1,#2 \rangle}
\newcommand{\ave}[1]{\langle #1 \rangle}
\title{Multi-parameter Singular Integral Operators and Representation Theorem}
\author{Yumeng Ou}
\address{Department of Mathematics, Brown University, Providence RI, USA}
\email{yumeng\_ou@brown.edu}
\subjclass[2010]{42B20}
\begin{document}
\maketitle

\begin{abstract}
We formulate a class of singular integral operators in arbitrarily many parameters using mixed type characterizing conditions. We also prove a multi-parameter representation theorem saying that a general operator in our class can be represented as an average of sums of dyadic shifts, which implies a new multi-parameter $T1$ theorem as a byproduct. Furthermore, an equivalence result between ours and Journ\'e's class of multi-parameter operators is established, whose proof requires the multi-parameter $T1$ theorem. These results generalize to arbitrarily many parameters recent results of Hyt\"onen \cite{Hy}, Martikainen \cite{Ma}, and Grau de la Herran \cite{Gr}.
\end{abstract}

\section{Introduction}

The study of singular integral operators on product spaces generalizing the classical Calder\'on-Zygmund theory has a history of more than 30 years, starting from \cite{FS} by Fefferman and Stein where bi-parameter operators of convolution type are carefully treated. Later, Journ\'e in \cite{Jo} established the first class of general multi-parameter singular integral operators which are not necessarily to be of convolution type, using vector valued Calder\'on-Zygmund theory and an inductive machinery. In the same paper, a multi-parameter $T1$ theorem is also proved. Very recently, Pott and Villarroya \cite{PV} formulated a new class of bi-parameter singular integral operators where the vector-valued formulations are replaced by mixed type conditions directly assumed on the operator. Their approach is then refined by Martikainen in \cite{Ma}, where he proved a bi-parameter representation of singular integrals by dyadic shifts, generalizing the famous one-parameter result of Hyt\"onen \cite{Hy}.

The representation theorem has been proven to be an incredibly useful tool in the field of singular integrals, as it enables one to reduce the problems of a general operator to problems of some very simple dyadic shift operators. For example, in \cite{HPTV} it has been utilized by Hyt\"onen, P\'erez, Treil and Volberg to obtain a simplified proof of the $A_2$ conjecture, and in \cite{DO} it has been applied to derive an upper bound estimate for iterated commutators by Dalenc and Ou. Moreover, the representation theorem also implies as a direct consequence a new $T1$ theorem. 

The theory of multi-parameter singular integral operators generally involves an additional layer of difficulty beyond the bi-parameter theory. Usually for bi-parameter problems on $\mathbb{R}\times\mathbb{R}$, in the inductive step, by slicing away one dimension one will reduce to the one-parameter setting. This is not the case for $n$-parameter problems, where $n\geq 3$. Furthermore, there are results that are true in the bi-parameter setting but fail to hold in the multi-parameter setting, for example the results regarding rectangle atoms discussed by Fefferman in \cite{Fe}. (Also see Journ\'e \cite{Jo2}.) Naturally, it has been asked by several experts in the field \cite{LP} whether one can establish a representation theorem in multi-parameters, which becomes the main motivation and the central problem this note will be dealing with. 

The first difficulty one encounters is how to generalize Martikainen's class of operators to more than two parameters, establishing a group of appropriate mixed type conditions that characterizes operators suitable to work with. Recall that in the classical $T1$ theorem, the hypotheses involve assumptions of the size and smoothness of the kernel, a weak boundedness property (WBP), and BMO conditions. It is then natural to formulate nine different so-called mixed type conditions (such as kernel/kernel, BMO/WBP and so on) for bi-parameter operators, which is, morally speaking, what Martikainen did in \cite{Ma}. However, there is no obvious way to generalize to multi-parameters formulations of such mixed type conditions. In fact, although Martikainen has done a brilliant job in \cite{Ma} to introduce the so-called full kernel and partial kernel assumptions on the operator, his assumptions are clear precisely because once a parameter is taken away, what's left becomes a one-parameter object.

The second difficulty, of course, is the proof of the representation theorem itself. Once the proper assumptions are formulated, the proof in the multi-parameter setting requires no new techniques. However, verifying that the theorem holds requires a delicately analysis of the symmetries of the operator and the particularly nice formulation of the conditions.

The main contributions of this note are the following. First, mixed type conditions for multi-parameter operators are formulated along the lines of \cite{PV} and \cite{Ma},  establishing the appropriate class of multi-parameter singular integral operators. Second, we prove a representation theorem in arbitrarily many parameters, which yields a new multi-parameter $T1$ theorem. Finally, as an application of our multi-parameter $T1$ theorem, we show that our class of multi-parameter singular integrals is equivalent to the class studied by Journ\'e in \cite{Jo}. This generalizes a recent result of Grau de la Herran \cite{Gr} to arbitrarily many parameters. This shows that Journ\'e's class of operators, originally formulated in vector-valued language, can be characterized by conditions that are more intrinsic and easier to verify.

The paper is organized as follows. In section \ref{Assump} and \ref{remark}, we define a class of multi-parameter singular integral operators characterized by new mixed type conditions. The statement of the multi-parameter representation theorem and its proof are presented in section \ref{theorem} and \ref{proof}. We then discuss the equivalence between our class and Journ\'e's class of operators in section \ref{Journe}, followed by a discussion of the necessity of some of the mixed conditions at the end.

\section*{Acknowledgement}

The author would like to thank Henri Martikainen and Jill Pipher for multiple useful conversations which granted valuable insight for the paper.

\section{A class of $n$-parameter singular integral operators}\label{Assump}

In $\mathbb{R}^{\vec{d}}:=\mathbb{R}^{d_1}\times\cdots\times\mathbb{R}^{d_n}$, where $n\in\No$ denotes the number of parameters, let $T$ be a linear operator continuously mapping $C_0^\infty(\mathbb{R}^{d_1})\times\cdots\times C^\infty_0(\mathbb{R}^{d_n})$ to its dual. $\forall S\subset \{1,2,\ldots,n\}$, define the partial adjoint $T_S$ by exchanging the $i^{th}$ variable, $\forall i\in S$, i.e. 
\[
\pair{T(f_S\otimes f_{S^c})}{g_S\otimes g_{S^c}}=\pair{T_S(g_S\otimes f_{S^c})}{f_S\otimes g_{S^c}},
\]
where $f_S, g_S$ are functions of the $i^{th}$ variables for $i\in S$, and $f_{S^c}, g_{S^c}$ are functions of the $i^{th}$ variables for $i\notin S$.

We say $T$ is in our class of \emph{$n$-parameter singular integral operators} if for any $S$, $T_S$ satisfies the following \emph{full kernel} and \emph{partial kernel assumptions}.

\subsection{Full kernel}\label{full}
For any $f=\otimes_{i=1}^n f_i,g=\otimes_{i=1}^n g_i\in C_0^\infty(\mathbb{R}^{d_1})\times\cdots\times C^\infty_0(\mathbb{R}^{d_n})$ such that $\forall i\in\{1,2,\ldots,n\}$, $\text{spt}f_i\cap\text{spt}g_i=\emptyset$, there holds
\[
\pair{T_Sf}{g}=\int_{\R^{\vec{d}}}\int_{\R^{\vec{d}}}K_S(x,y)f(y)g(x)\,dxdy,
\]
where the kernel $K_S(x,y)$ satisfies the following mixed size-H\"older conditions:

For any subset $W\subset\{1,2,\ldots,n\}$, when $|x_i-x'_i|\leq |x_i-y_i|/2, \forall i\in W$, there holds
\[
|\sum_{\Lambda\subset W}(-1)^{|\Lambda|}K^\Lambda_S(x,x';y)|\lesssim\left(\prod_{i\in W}\frac{|x_i-x'_i|^\delta}{|x_i-y_i|^{d_i+\delta}}\right)\left(\prod_{i\in\{1,2,\ldots,n\}\setminus W}\frac{1}{|x_i-y_i|^{d_i}}\right),
\]
where $0<\delta<1$ is a fixed constant, and $K^\Lambda_S(x,x';y)$ is defined as $K_S$ evaluated at $x_i$ for $i\notin \Lambda$, at $x'_i$ for $i\in \Lambda$. Note that when $W=\emptyset$, this is the pure size condition, while when $W=\{1,2,\ldots,n\}$, this becomes the H\"older condition we are familiar with in one-parameter and bi-parameter settings.

\subsection{Partial kernel}\label{partial}

Let $V$ be any nonempty proper subset of $\{1,2,\ldots,n\}$, and $f=f_V\otimes f_{V^c},g=g_V\otimes g_{V^c}\in C_0^\infty(\mathbb{R}^{d_1})\times\cdots\times C^\infty_0(\mathbb{R}^{d_n})$, where $f_V=\otimes_{i\in V}f_i$ and similarly for others. Suppose for any variable $i\in V$, $\text{spt}f_i\cap\text{spt}g_i=\emptyset$, there holds
\[
\pair{T_Sf}{g}=\int_{\otimes_{i\in V}\R^{d_i}}\int_{\otimes_{i\in V}\R^{d_i}}K^V_{S,f_{V^c},g_{V^c}}(x,y)f_V(y)g_V(x)\,dxdy,
\]
where the kernel $K^V_{S,f_{V^c},g_{V^c}}$ satisfies the following mixed size-H\"older conditions:

For any subset $W\subset V$, when $|x_i-x'_i|\leq |x_i-y_i|/2, \forall i\in W$, there holds
\[
|\sum_{\Lambda\subset W}(-1)^{|\Lambda|}K^{V,\Lambda}_{S,f_{V^c},g_{V^c}}(x,x';y)|\leq C^V_S(f_{V^c},g_{V^c})\left(\prod_{i\in W}\frac{|x_i-x'_i|^\delta}{|x_i-y_i|^{d_i+\delta}}\right)\left(\prod_{i\in V\setminus W}\frac{1}{|x_i-y_i|^{d_i}}\right),
\]
where $K^{V,\Lambda}_{S,f_{V^c},g_{V^c}}(x,x';y)$ is defined as $K^V_{S,f_{V^c},g_{V^c}}$ evaluated at $x_i$ for $i\notin \Lambda$, at $x'_i$ for $i\in \Lambda$. 

Moreover, we require that constant $C^V_S(f_{V^c},g_{V^c})$ satisfies the following WBP/BMO conditions:

For any subset $W\subset V^c$, any cubes $I_i\subset \R^{d_i}$, $i\in W$, there holds
\[
\|C^V_S(\left(\otimes_{i\in W}\chi_{I_i}\right)\otimes\left(\otimes_{i\in V^c\setminus W}1\right),\left(\otimes_{i\in W}\chi_{I_i}\right)\otimes \cdot)\|_{BMO_{prod}(\otimes_{i\in V^c\setminus W}\mathbb{R}^{d_i})}\lesssim \prod_{i\in W}|I_i|.
\]

There are several equivalent interpretations of the product BMO norm. One result proved by Pipher and Ward in \cite{PW} and reproved by Treil in \cite{Tr} is that in the multi-parameter setting, a function is in product BMO if and only if it is in dyadic product BMO uniformly with respect to any dyadic grids. Since dyadic product BMO norm can be characterized using product Carleson measure, one can express the WBP/BMO condition above by the following: For any product dyadic grid $\mathcal{D}=\otimes_{i\in V^c\setminus W}\mathcal{D}_i$,
\[
\begin{split}
&\sup\frac{1}{|\Omega|}\sum_{\substack{R\subset\Omega, R\in \mathcal{D}\\R=\otimes_{j\in V^c\setminus W}J_j}}|C^V_S(\left(\otimes_{i\in W}\chi_{I_i}\right)\otimes\left(\otimes_{i\in V^c\setminus W}1\right),\left(\otimes_{i\in W}\chi_{I_i}\right)\otimes \left(\otimes_{j\in V^c\setminus W}h_{J_j}\right))|^2 \\
&\qquad \lesssim \prod_{i\in W}|I_i|^2,
\end{split}
\]
where the supremum is taken over all the measurable open sets $\Omega$ in $\otimes_{i\in V^c\setminus W}\mathbb{R}^{d_i}$ with finite measure.

The expression above is always well defined as the functions involved are all tensor products. In the case when one can naturally extend the definition of operator $T$ to act on more general multivariate functions, one can also rephrase the WBP/BMO condition by duality as the following: For any function $h\in H^1_{prod}(\otimes_{i\in V^c\setminus W}\mathbb{R}^{d_i})$, 
\[
|C^V_S(\left(\otimes_{i\in W}\chi_{I_i}\right)\otimes\left(\otimes_{i\in V^c\setminus W}1\right),\left(\otimes_{i\in W}\chi_{I_i}\right)\otimes h)|\lesssim \left(\prod_{i\in V^c\setminus W}|I_i|\right)\|h\|_{H^1_{prod}}.
\]

This completes our definition of the \emph{$n$-parameter singular integral operators}. And one can similarly define an \emph{$n$-parameter CZO} if there are some additional boundedness assumption on the operator.

\begin{defn}
$T$ is called an $n$-parameter CZO if it is an $n$-parameter singular integral operator defined as above and $T_S:\,L^2\rightarrow L^2$, any $S\subset\{1,2,\ldots,n\}$.
\end{defn}

In order to derive the multi-parameter representation theorem for such operators later in the note, as a preparation, we will need the definition of the so called \emph{mixed BMO/WBP assumptions}, which we give as below. Note that these are not characterizing conditions of our class of singular integrals.

\subsection{BMO/WBP}\label{BMOWBP}

We say that an operator $T_S$ satisfies the mixed BMO/WBP conditions if for any subset $W\subset\{1,2,\ldots,n\}$, any cubes $I_i\subset \R^{d_i}$, $i\in W$, there holds 
\[
\|\pair{T_S(\left(\otimes_{i\in W}\chi_{I_i}\right)\otimes\left(\otimes_{i\in W^c}1\right))}{\left(\otimes_{i\in W}\chi_{I_i}\right)\otimes \cdot}\|_{BMO_{prod}(\otimes_{i\in W^c}\mathbb{R}^{d_i})}\lesssim \prod_{i\in W}|I_i|.
\]

This is the pure BMO condition when $W=\emptyset$,  and the pure dyadic weak boundedness property when $W=\{1,2,\ldots,n\}$. Again, one can interpret the product BMO norm in several different ways, as we described above.

To end the section, we would like to emphasize that the class of singular integral operators defined above is indeed a generalization of the most natural classes of one-parameter and bi-parameter singular integral operators studied in harmonic analysis. When $n=1$, it coincides with the class of singular integral operators associated with standard kernel. When $n=2$, it is the same as the class of bi-parameter operators defined by Martikainen in \cite{Ma} (modulo that some of the conditions in partial kernel assumptions are formulated slightly differently), and is known to be equivalent to the classes of Journ\'e \cite{Jo} and Pott-Villarroya \cite{PV}, a result recently proved by Grau de la Herran \cite{Gr}. 

Furthermore, it is not hard to examine that our class of $n$-parameter singular integrals includes operators of tensor product type as a special case. Let's take a look at the case $n=3$ as an example. Given CZOs $T_i$ defined on $\mathbb{R}^{d_i}$, $i=1,2,3$, it is easy to see that the operator $T_1\otimes T_2\otimes T_3$ satisfies the full kernel assumptions. To check one of the partial kernel assumptions, for any test functions with $\text{spt}f_1\cap\text{spt}g_1=\emptyset$, one can define a partial kernel 
\[
K^{\{1\}}_{f_2\otimes f_3,g_2\otimes g_3}(x_1,y_1)=K_1(x_1,y_1)\pair{T_2\otimes T_3(f_2\otimes f_3)}{g_2\otimes g_3},
\]
where $K_1(x_1,y_1)$ is the kernel of $T_1$. Observe that $T_2\otimes T_3$ is a Journ\'e type bi-parameter CZO studied in \cite{Jo}, hence is bounded on $L^2$ and maps $1\otimes 1$ into product BMO, which thus implies the required WBP/BMO conditions for constants $C^{\{1\}}(f_2\otimes f_3,g_2\otimes g_3)$. We will give a more thorough discussion of the Journ\'e type multi-parameter singular integral operators in section \ref{Journe}.

\section{A remark on the well-definedness of the BMO assumptions}\label{remark}

Among the various conditions satisfied by an $n$-parameter operator $T$, many of them are establishing certain bounds on pairings involving $T$ acting on function $1$ in some of the variables. It is thus necessary to articulate how these objects are defined. For simplicity, let's look at the case $n=3$.

Recall that in the partial kernel assumptions, if $f=f_{1}\otimes f_2\otimes f_3, g=g_{1}\otimes g_2\otimes g_3$,  and $\text{spt}f_{1}\cap\text{spt}g_{1}=\text{spt}f_{2}\cap\text{spt}g_{2}=\emptyset$ (i.e. $V=\{1,2\}$), one wants to show that $C_S^V(1,\cdot)\in BMO(\R^{d_3})$, which according to \cite{PW} is the same as showing that for any dyadic system $\mathcal{D}$ of $\R^{d_3}$, it is in dyadic $BMO_{\mathcal{D}}(\R^{d_3})$.

Hence, it suffices to give a meaning to $C_S^V(1,h_{I_3})$ for any Haar function in the third variable, i.e. to define the pairing $\pair{T_S(f_{1}\otimes f_2\otimes 1)}{g_{1}\otimes g_2\otimes h_{I_3}}$. This can be done by dividing $1=\chi_{3I_3}+\chi_{(3I_3)^c}$, where the first term makes sense since $T$ is continuous (more precisely, one needs kernel representation, WBP and dominated convergence to justify the well-definedness of the bilinear form of non smooth functions), while the second term can be defined using the full kernel representation whose convergence is guaranteed by H\"older condition.

Second, still in the partial kernel assumptions, if one only has $f_3\cap g_3=\emptyset$ (i.e. $V=\{3\}$), the well-definedness of constant $C_S^V(\chi_{I_1}\otimes 1,\chi_{I_1}\otimes\cdot)$ is similar as the above case, so we only look at the meaning of $C_S^V(1\otimes 1,\cdot)$ as a function in dyadic $BMO_{\mathcal{D}(\R^{d_1}\times\R^{d_2})}$. To define $\pair{T_S(1\otimes 1\otimes f_3)}{h_{I_1}\otimes h_{I_2}\otimes g_3}$, clearly, one can divide $1\otimes 1=\chi_{3I_1}\otimes\chi_{3I_2}+\chi_{3I_1}\otimes\chi_{(3I_2)^c}+\chi_{(3I_1)^c}\otimes\chi_{3I_2}+\chi_{(3I_1)^c}\otimes\chi_{(3I_2)^c}$, where the first and last term are easy to deal with. While for the mixed terms, say, the third one, if $\chi_{(3I_1)^c}$ is replaced by a $C_0^\infty$ function, then the pairing is apparently well defined through the partial kernel representation. Now even though $\chi_{(3I_1)^c}$ is only bounded, we can still define the pairing as
\[
\int K^{\{1,3\}}_{S,\chi_{3I_2},h_{I_2}}(x_1,y_1,x_3,y_3)\chi_{(3I_1)^c}(y_1)f_3(y_3)h_{I_1}(x_1)g_3(x_3)\,dx_1dx_3dy_1dy_3,
\]
where the integral converges since one can change the kernel to
\[
K^{\{1,3\}}_{S,\chi_{3I_2},h_{I_2}}(x_1,y_1,x_3,y_3)-K^{\{1,3\}}_{S,\chi_{3I_2},h_{I_2}}(x_1,y_1,c_{I_3},y_3)
\]
and use the mixed H\"older-size condition.

Finally, in the BMO/WBP assumptions, to give a meaning to
\[
\pair{T_S(\left(\otimes_{i\in W}\chi_{I_i}\right)\otimes\left(\otimes_{i\in W^c}1\right))}{\left(\otimes_{i\in W}\chi_{I_i}\right)\otimes\cdot},
\]
it is then sufficient to define what it means for the function to be paired with tensors of Haar functions. This can be done by dividing $1\otimes\cdots\otimes 1$ into several parts  similarly as above, and use partial kernel representation and H\"older conditions to obtain the convergence of the corresponding integrals.

\section{Multi-parameter representation theorem}\label{theorem}

In order to formulate the representation theorem in the multi-parameter setting, one first needs to recall the notion of shifted dyadic grids, which are essential elements of the theorem. Denote $\mathcal{D}^0_i:=\{2^{-k}([0,1]^{d_i}+m):\,k\in\mathbb{Z},m\in\mathbb{Z}^{d_i}\}$ as the standard dyadic grid in the $i$-th variable, $1\leq i\leq n$. Let $\omega=(\omega^j_i)_{j\in\mathbb{Z}}\in(\{0,1\}^{d_i})^{\mathbb{Z}}$ and $I\dotplus \omega_i:=I+\sum_{j:2^{-j}<\ell(I)}2^{-j}\omega^j_i$, then
\[
\mathcal{D}^\omega_i:=\{I\dotplus\omega_i:\,I\in\mathcal{D}^0_i\}
\]
is a shifted dyadic grid associated with parameter $\omega_i$. We usually write $\mathcal{D}_i$ for short in practice when the dependence on $\omega_i$ is not explicitly needed.

If we assume each $\omega_i$ is an independent random variable having an equal probability $2^{-d_i}$ of taking any of the $2^{d_i}$ values in $\{0,1\}^{d_i}$, we obtain a random dyadic system $\mathcal{D}_1\times\cdots\times\mathcal{D}_n$.

A \emph{dyadic shift} with parameter $i_1,j_1,\ldots,i_n,j_n\in\mathbb{N}$ associated with dyadic grids $\mathcal{D}_1,\ldots,\mathcal{D}_n$ is an $L^2\rightarrow L^2$ operator with norm $\leq 1$ defined as
\[
\begin{split}
&S^{i_1j_1,\ldots,i_nj_n}_{\mathcal{D}_1\ldots\mathcal{D}_n}f\\
&:=\sum_{s=1}^n\sum_{K_s\in\mathcal{D}_s}\sum_{\substack{I_s,J_s\in\mathcal{D}_s,I_s,J_s\subset K_s\\\ell(I_s)=2^{-i_s}\ell(K_s)\\\ell(J_s)=2^{-j_s}\ell(K_s)}}a_{I_1J_1K_1\ldots I_nJ_nK_n}\pair{f}{h_{I_1}\otimes\cdots\otimes h_{I_n}}h_{J_1}\otimes\cdots\otimes h_{J_n}\\
&=:\sum_{s=1}^n\sum_{K_s\in\mathcal{D}_s}\sum_{\substack{I_s,J_s\in\mathcal{D}_s\\I_s,J_s\subset K_s}}^{(i_s,j_s)}a_{I_1J_1K_1\ldots I_nJ_nK_n}\pair{f}{h_{I_1}\otimes\cdots\otimes h_{I_n}}h_{J_1}\otimes\cdots\otimes h_{J_n},
\end{split}
\]
where the coefficients satisfy
\[
|a_{I_1J_1K_1\ldots I_nJ_nK_n}|\leq \frac{\sqrt{|I_1||J_1|\cdots |I_n||J_n|}}{|K_1|\cdots |K_n|},
\]
and $h_{I_s}$ is a Haar function on $I_s$, similarly for $h_{J_s}$. Note that for any dyadic cube $I\subset \mathbb{R}^{d_i}$, there are $2^{d_i}$ associated Haar functions $h_{I}$, with one of them being the noncancellative function $|I|^{-1/2}\chi_I$ and all the other ones being cancellative. We allow any choices of Haar functions, noncancellative or cancellative, in the definition of dyadic shifts. In addition, we will call the dyadic shift \emph{cancellative} if all the Haar functions that appear in the sum are cancellative. It is not hard to show that when the shift is cancellative, the $L^2$ boundedness requirement in fact follows from the boundedness of coefficients directly. Furthermore, it is also worth observing that $n$-parameter dyadic paraproducts are particular examples of noncancellative dyadic shifts.

Now we are ready to state the representation theorem. Recall that $T$ is said to be an $n$-parameter singular integral operator in our class if it satisfies both the full kernel and partial kernel assumptions defined in section \ref{full}, \ref{partial}.

\begin{thm}\label{Repre}
For an $n$-parameter singular integral operator $T$, which satisfies in addition the BMO/WBP assumptions (see section \ref{BMOWBP}), there holds for some $n$-parameter shifts $S^{i_1j_1\ldots i_nj_n}_{\mathcal{D}_{1}\ldots\mathcal{D}_{n}}$ that
\[
\pair{Tf}{g}=C_T\mathbb{E}_{\omega_1}\mathbb{E}_{\omega_2}\cdots\mathbb{E}_{\omega_n}\sum_{s=1}^n\sum_{(i_s,j_s)\in\N^2}\left(\prod_{t=1}^n 2^{-\max(i_t,j_t)\delta/2}\right)\pair{S^{i_1j_1\ldots i_nj_n}_{\mathcal{D}_{1}\ldots\mathcal{D}_{n}}f}{g},
\]
where noncancellative shifts may only appear when there is some $s$ such that $(i_s,j_s)=(0,0)$.
\end{thm}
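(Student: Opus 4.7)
The plan is to follow the template of Hyt\"onen \cite{Hy} and Martikainen \cite{Ma}, but carried out in all $n$ parameters simultaneously. I would fix independent random shift parameters $\omega_1,\ldots,\omega_n$ and expand $\pair{Tf}{g}$ in the $n$-parameter Haar basis as
\[
\pair{Tf}{g}=\sum_{\substack{I_s,J_s\in\mathcal{D}_s\\ s=1,\ldots,n}}\pair{T(h_{I_1}\otimes\cdots\otimes h_{I_n})}{h_{J_1}\otimes\cdots\otimes h_{J_n}}\prod_{s=1}^{n}\pair{f}{h_{I_s}}\pair{g}{h_{J_s}}.
\]
Applying the Nazarov--Treil--Volberg good/bad decomposition in each parameter separately, and using the independence of the $\omega_s$, the expected contribution of any tuple in which some $I_s$ or $J_s$ is ``bad'' vanishes. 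It therefore suffices to sum over tuples in which, in every parameter, the smaller cube is good inside the larger.

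Next, I would partition the remaining sum according to the geometric relation between $I_s$ and $J_s$ in each coordinate: \emph{separated}, properly \emph{nested}, or \emph{equal}. This yields at most $3^n$ mixed types. For each, introduce a common dyadic ancestor $K_s$ and integers $(i_s,j_s)$ recording the scale ratios $\ell(I_s)/\ell(K_s)$ and $\ell(J_s)/\ell(K_s)$. The aim is to reorganize each mixed-type contribution as a dyadic shift $S^{i_1j_1\ldots i_nj_n}_{\mathcal{D}_1\ldots\mathcal{D}_n}$ with coefficients bounded by $\prod_s\sqrt{|I_s||J_s|}/|K_s|$, multiplied by the decay prefactor $\prod_t 2^{-\max(i_t,j_t)\delta/2}$.

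Each coordinate is then handled by the assumption matched to its type. For \emph{separated} coordinates, the mixed size--H\"older condition on the full or the appropriate partial kernel produces H\"older decay $2^{-\max(i_s,j_s)\delta}$ there, while the $L^1$ pairing with the cancellative Haar yields the factor $\sqrt{|I_s||J_s|}/|K_s|$; goodness of the inner cube lets the kernel bound be applied uniformly. For \emph{nested} coordinates, splitting the larger Haar as its average on the smaller cube plus a cancellative correction reduces locally either to the separated case or to a paraproduct-type estimate. For \emph{equal} coordinates ($I_s=J_s$), the BMO/WBP assumption of section \ref{BMOWBP} (or the corresponding bound on $C^V_S$ coming from a partial kernel representation) is invoked; a Carleson/paraproduct expansion there generates a noncancellative shift in that variable, which is exactly where $(i_s,j_s)=(0,0)$ and noncancellative Haar functions are needed.

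The main obstacle is the mixed-type analysis: obtaining a genuine \emph{product} decay, rather than merely single-coordinate decay, when several parameters are of different types. I would peel off coordinates one at a time, at each step replacing the current coordinate by either a one-variable kernel estimate or a paraproduct bound in the remaining parameters, in a way that is compatible with the next peeling. This is precisely where the alternating-sum formulation of the mixed size--H\"older conditions in sections \ref{full}--\ref{partial} and the Pipher--Ward characterization of product BMO do the real work: the former delivers H\"older decay in any prescribed subset of variables while preserving the joint kernel representation in the rest, and the latter allows the BMO/WBP bounds on $C^V_S$ to be cashed in coordinate by coordinate. Once joint decay has been secured on each of the $3^n$ mixed types, the reassembly into $n$-parameter dyadic shifts and the verification of the coefficient normalization are formal.
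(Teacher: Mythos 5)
Your proposal follows essentially the same route as the paper's proof: the $n$-fold averaging formula with good/bad cubes, a case split by the geometric relation of $I_s$ and $J_s$ in each coordinate, and a coordinate-by-coordinate reduction in which separated coordinates are handled by the mixed size--H\"older conditions, nested coordinates by splitting the larger Haar function into its average plus a correction supported away from the smaller cube, and the resulting paraproduct pieces by the BMO/WBP conditions read through the Carleson/Pipher--Ward characterization of product BMO. The eight-term decomposition the paper carries out in the Inside/Inside/Inside case is exactly your ``peeling off coordinates one at a time,'' and the hierarchy of one-, two- and three-parameter paraproducts you anticipate is what actually appears.

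Two points need repair. First, your trichotomy separated/nested/equal omits the \emph{Near} case: disjoint cubes with $d(I_s,J_s)\leq\ell(I_s)^{\gamma_s}\ell(J_s)^{1-\gamma_s}$. These are not ``separated'' in the sense your kernel estimate requires --- the H\"older decay $2^{-\max(i_s,j_s)\delta}$ is available only when the distance exceeds $\ell(I_s)^{\gamma_s}\ell(J_s)^{1-\gamma_s}$ --- so they must be treated as a fourth case, in which goodness of the smaller cube forces $\ell(J_s)\leq 2^{r}\ell(I_s)$ and one falls back on size estimates and the weak boundedness property over finitely many scales. Second, two smaller imprecisions: the expectation over tuples containing a bad cube does not ``vanish''; rather, independence shows that conditioning on goodness of the smaller cube in each parameter reproduces the full pairing up to the factor $1/(\pi_{\text{good}}^{1}\cdots\pi_{\text{good}}^{n})$, which is where the constant $C_T$ comes from. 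And the noncancellative shifts arise not from the Equal coordinates but from the paraproduct terms of the Inside case, i.e.\ the terms where the larger Haar function is replaced by its average and $T$ ends up acting on $1$ in that variable; the Equal and Near coordinates use the WBP but still yield cancellative shifts with $(i_s,j_s)=(0,0)$. With these corrections your outline matches the paper's argument.
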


$f$ and $g$ above are arbitrary functions taken from some particularly nice dense subset of $L^2(\mathbb{R}^{\vec{d}})$, for example, the finite linear combinations of tensor products of univariate functions in $C^\infty_0(\mathbb{R}^{d_i})$. Hence, according to the uniform boundedness of dyadic shifts, an immediate result implied by the representation theorem is the following.

\begin{cor}\label{Cor}
An $n$-parameter singular integral operator $T$ satisfying the BMO/WBP assumptions is bounded on $L^2(\R^{\vec{d}})$.
\end{cor}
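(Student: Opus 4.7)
The plan is to derive the $L^2$ boundedness directly from the representation formula in Theorem~\ref{Repre}, combining the uniform $L^2\to L^2$ bound on each dyadic shift with the absolute summability of the geometric weights $\prod_{t=1}^n 2^{-\max(i_t,j_t)\delta/2}$.

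First, I would take $f,g$ in the dense subclass described after the theorem (finite linear combinations of tensor products of $C_0^\infty$ functions), apply Theorem~\ref{Repre}, and pass absolute values inside the sum and the expectations to obtain
\[
|\pair{Tf}{g}|\leq C_T\,\mathbb{E}_{\omega_1}\cdots\mathbb{E}_{\omega_n}\sum_{(i_1,j_1,\ldots,i_n,j_n)\in\N^{2n}}\prod_{t=1}^n 2^{-\max(i_t,j_t)\delta/2}\,|\pair{S^{i_1j_1\ldots i_nj_n}_{\mathcal{D}_{1}\ldots\mathcal{D}_{n}}f}{g}|,
\]
where the nested sum indexed by $s$ in the statement of the theorem is interpreted, in standard fashion, as a single sum over the full tuple $(i_1,j_1,\ldots,i_n,j_n)$.

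Second, since by definition each dyadic shift has operator norm at most $1$ on $L^2$, Cauchy--Schwarz yields $|\pair{S^{i_1j_1\ldots i_nj_n}_{\mathcal{D}_{1}\ldots\mathcal{D}_{n}}f}{g}|\leq\|f\|_2\|g\|_2$ pointwise in each realization of the $\omega_t$'s. The weight sum factorizes as
\[
\sum_{(i_1,j_1,\ldots,i_n,j_n)\in\N^{2n}}\prod_{t=1}^n 2^{-\max(i_t,j_t)\delta/2}=\prod_{t=1}^n\sum_{(i,j)\in\N^2}2^{-\max(i,j)\delta/2},
\]
and each factor equals $\sum_{k=0}^\infty(2k+1)2^{-k\delta/2}$, which converges since $0<\delta<1$. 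The expectations are then merely averaging a uniform bound, so they contribute nothing further. Combining these observations yields $|\pair{Tf}{g}|\lesssim\|f\|_2\|g\|_2$ for all $f,g$ in the dense subclass.

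Finally, by density this bilinear estimate extends $T$ uniquely to a bounded operator on $L^2(\R^{\vec{d}})$ that agrees with the original $T$ on the dense subclass. There is essentially no real obstacle: the corollary is a direct consequence of the representation theorem, and the only nontrivial point is the combinatorial bookkeeping of the multi-index sum appearing in Theorem~\ref{Repre}, together with verifying that the $n$-fold product of geometric series is finite.
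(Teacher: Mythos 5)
Your argument is correct and coincides with the paper's: the corollary is stated there as an immediate consequence of Theorem \ref{Repre} via the uniform $L^2$ bound on dyadic shifts and the summability of the factors $\prod_{t}2^{-\max(i_t,j_t)\delta/2}$, exactly as you spell out. The only cosmetic remark is that convergence of $\sum_{k\geq 0}(2k+1)2^{-k\delta/2}$ needs only $\delta>0$, not $\delta<1$.
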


\begin{rem}
In the one-parameter and bi-parameter versions of the representation theorem, see \cite{Hy}, \cite{Ma}, one needs the additional a priori assumption that $T$ is bounded on $L^2$ in order to justify the convergence of some infinite series in the proof. This makes the $T1$ type corollary only a quantitative result. However, very recently, it is suggested by T. Hyt\"onen that one can prove the representation theorem without assuming any a priori bound on $T$, by first proving a "weak representation" depending on functions $f, g$, which then implies that $T$ is bounded on $L^2$. Hence, the corollary obtained above is indeed a $T1$ theorem of full strength, which is certainly of its own interest. Previously, the only known $T1$ type theorem in more than two parameters is proved by Journ\'e in \cite{Jo} by induction, using a vector valued argument. The advantage of our $T1$ theorem is that the mixed type conditions are expressed in a more transparent way and much easier to verify. In fact, we will see an application of our $T1$ theorem later in the paper, when we establish the relationship between Journ\'e's and our class of multi-parameter singular integral operators.
\end{rem}

Another useful observation is that due to the symmetry of the assumptions on the $n$-parameter singular integral operators, one can conclude that if $T$ is an $n$-parameter SIO satisfying the BMO/WBP assumptions, then any of its partial adjoints $T_S$ is bounded on $L^2$. Hence $T$ is an $n$-parameter CZO defined in section \ref{Assump}. In fact, the other direction also holds true, i.e. $T$ being an $n$-parameter CZO implies the BMO/WBP assumptions. We leave the discussion of this point to the end of the paper.

\section{Proof of Theorem \ref{Repre}}\label{proof}

Let's prove the case $n=3$ as an example, which is sufficient in showing the new difficulties arising in the multi-parameter setting and in explaining our strategy. Roughly speaking, we will first establish a tri-parameter version of the averaging formula, where the notions of good and bad cubes appear. Then, by decomposing the pairing $\pair{Tf}{g}$ into several mixed parts (separated, inside, near and equal), a case by case discussion will lead to the desired result.

\subsection{Randomizing process and averaging formula}

To start with, through a similar process of randomization independently in each variable, as described in \cite{Hy} and \cite{Ma}, it is not hard to obtain the following tri-parameter version of the key averaging formula:
\[
\begin{split}
\pair{Tf}{g}=&C\mathbb{E}\sum_{I_1,J_1\in\mathcal{D}_{1}}\sum_{I_2,J_2\in\mathcal{D}_{2}}\sum_{I_3,J_3\in\mathcal{D}_{3}}\chi_{\text{good}}(sm(I_1,J_1))\chi_{\text{good}}(sm(I_2,J_2))\chi_{\text{good}}(sm(I_3,J_3))\\
&\pair{T(h_{I_1}\otimes h_{I_2}\otimes h_{I_3})}{h_{J_1}\otimes h_{J_2}\otimes h_{J_3}}\pair{f}{h_{I_1}\otimes h_{I_2}\otimes h_{I_3}}\pair{g}{h_{J_1}\otimes h_{J_2}\otimes h_{J_3}},
\end{split}
\]
where $\mathbb{E}=\mathbb{E}_{\omega_{1}}\mathbb{E}_{\omega_{2}}\mathbb{E}_{\omega_{3}}$ and $C=1/(\pi_{\text{good}}^{1}\pi_{\text{good}}^{2}\pi_{\text{good}}^{3})$. 

We remind the readers that a cube $I_i\in\mathcal{D}_i$ is called \emph{bad} if there is another $\tilde{I_i}\in\mathcal{D}_i$ such that $\ell(\tilde{I_i})\geq 2^r\ell(I_i)$ and $d(I_i,\partial\tilde{I_i})\leq 2\ell(I_i)^{\gamma_i}\ell(\tilde{I_i})^{1-\gamma_i}$, where $r$ is a fixed large number, $\gamma_i:=\delta/(2d_i+2\delta)$, and $\delta$ is the constant that appears in the kernel assumptions of the operator. Naturally, a cube is called \emph{good} if it is not bad. And $\pi_{\text{good}}^i:=\mathbb{P}_{\omega_i}(I_i\dotplus\omega_i\,\text{is good})$ is a parameter depending only on $\delta$, $d_i$ and $r$. One always fixes an $r$ large enough so that $\pi_{\text{good}}^i>0$ for any $1\leq i\leq n$.

In order to show the desired representation, we will then split the sums on the right hand side of the averaging formula into several pieces depending on the relative sizes of $I_i, J_i$, $i=1,2,3$, and whether the smaller cubes are far away, strictly inside, exactly equal, or close to the larger cubes (i.e. Separated, Inside, Equal or Near). More specifically, for each variable $i$, we split the sum
\[
\sum_{I_i}\sum_{J_i}=\sum_{\ell(I_i)\leq \ell(J_i)}+\sum_{\ell(I_i)>\ell(J_i)}=:I+II.
\]
Then decompose
\[
\begin{split}
I&=\sum_{\substack{\ell(I_i)\leq\ell(J_i)\\ d(I_i,J_i)>\ell(I_i)^{\gamma_i}\ell(J_i)^{1-\gamma_i}}}+\sum_{I_i\subsetneq J_i}+\sum_{I_i=J_i}+\sum_{\substack{\ell(I_i)\leq\ell(J_i)\\ d(I_i,J_i)\leq \ell(I_i)^{\gamma_i}\ell(J_i)^{1-\gamma_i}\\ I_i\cap J_i=\emptyset}}\\
&=:\text{Separated}+\text{Inside}+\text{Equal}+\text{Near}
\end{split}
\]
and similarly for $II$. The strategy is to prove that each of the terms above can be represented as sums of dyadic shifts.

Many of the cases can be discussed using the same technique as in \cite{Ma}, while for some mixed cases, new multi-parameter phenomena may appear and require extreme care. The good news is that the new mixed cases won't do us much harm since we have already formulated the proper assumptions on the operators at the beginning to handle them.

As one has already encountered in the bi-parameter setting in \cite{Ma}, different types of mixed paraproducts will appear according to the relative sizes of $I_i, J_i$. Since the worst situations one can expect are the mixed cases, we will look at the part of the sum corresponding to $|I_1|\leq |J_1|,|I_2|\leq |J_2|,|I_3|>|J_3|$, observing that other cases are symmetric or even simpler. According to the averaging formula, it thus suffices to assume that $I_1,I_2,J_3$ are all good cubes.

Moreover, recall that in \cite{Hy} and \cite{Ma}, the Separated, Near, and Equal parts of the sum can basically be estimated using full kernel assumptions and WBP, while the Inside part, being the most difficult one, involves in addition all the BMO type estimates. Hence, we will study the Inside/Inside/Inside part next, where all the new multi-parameter phenomena will appear. Note that although this is only one of the many cases one needs to discuss in order to obtain a full proof of Theorem \ref{Repre}, all the main difficulties in other cases are in fact already embedded in Inside/Inside/Inside, a fact that will become more and more clear throughout the proof. We want to emphasize that the reason why we assumed from the beginning that all the assumptions hold true for any partial adjoint $T_S$ of $T$ is exactly because of the symmetry of the mixed cases.

\subsection{Inside/Inside/Inside}

In this section, we study the case Inside/Inside/Inside, i.e. the summation over $I_1\subsetneq J_1,I_2\subsetneq J_2, J_3\subsetneq I_3$. Recall that $I_1, I_2, J_3$ are all good cubes. One first decomposes
\[
\pair{T(h_{I_1}\otimes h_{I_2}\otimes h_{I_3})}{h_{J_1}\otimes h_{J_2}\otimes h_{J_3}}=I+II+III+IV+V+VI+VII+VIII,
\]
where
\[
\begin{split}
&I:=\pair{T(h_{I_1}\otimes h_{I_2}\otimes s_{J_3I_3})}{s_{I_1J_1}\otimes s_{I_2J_2}\otimes h_{J_3}},\\
&II:=\ave{h_{I_3}}_{J_3}\pair{T(h_{I_1}\otimes h_{I_2}\otimes 1)}{s_{I_1J_1}\otimes s_{I_2J_2}\otimes h_{J_3}},\\
&III:=\ave{h_{J_2}}_{I_2}\pair{T(h_{I_1}\otimes h_{I_2}\otimes s_{J_3I_3})}{s_{I_1J_1}\otimes 1\otimes h_{J_3}},\\
&IV:=\ave{h_{J_2}}_{I_2}\ave{h_{I_3}}_{J_3}\pair{T(h_{I_1}\otimes h_{I_2}\otimes 1)}{s_{I_1J_1}\otimes 1\otimes h_{J_3}},\\
&V:=\ave{h_{J_1}}_{I_1}\pair{T(h_{I_1}\otimes h_{I_2}\otimes s_{J_3I_3})}{1\otimes s_{I_2J_2}\otimes h_{J_3}},\\
&VI:=\ave{h_{J_1}}_{I_1}\ave{h_{I_3}}_{J_3}\pair{T(h_{I_1}\otimes h_{I_2}\otimes 1)}{1\otimes s_{I_2J_2}\otimes h_{J_3}},\\
&VII:=\ave{h_{J_1}}_{I_1}\ave{h_{J_2}}_{I_2}\pair{T(h_{I_1}\otimes h_{I_2}\otimes s_{J_3I_3})}{1\otimes 1\otimes h_{J_3}},\\
&VIII:=\ave{h_{J_1}}_{I_1}\ave{h_{J_2}}_{I_2}\ave{h_{I_3}}_{J_3}\pair{T(h_{I_1}\otimes h_{I_2}\otimes 1)}{1\otimes 1\otimes h_{J_3}}.
\end{split}
\]

In the above, $s_{I_1J_1}:=\chi_{Q_1^c}(h_{J_1}-\ave{h_{J_1}}_{Q_1})$, $s_{I_2J_2}:=\chi_{Q_2^c}(h_{J_2}-\ave{h_{J_2}}_{Q_2})$, $Q_1,Q_2$ being the child of $J_1,J_2$ containing $I_1, I_2$, respectively, and $s_{J_3I_3}:=\chi_{Q_3^c}(h_{I_3}-\ave{h_{I_3}}_{Q_3})$, $Q_3$ being the child of $I_3$ containing $J_3$. The relevant properties are $\text{spt}s_{I_1J_1}\subset Q_1^c$, $\text{spt}s_{I_2J_2}\subset Q_2^c$, $\text{spt}s_{J_3I_3}\subset Q_3^c$, and $|s_{I_1J_1}|\leq 2|J_1|^{-1/2}$, $|s_{I_2J_2}|\leq 2|J_2|^{-1/2}$, $|s_{J_3I_3}|\leq 2|I_3|^{-1/2}$.

Next, we show that the sum corresponding to each of the eight terms above can be realized as a sum of dyadic shifts. The estimate of  term I doesn't require any BMO conditions, while all the other terms require delicate BMO norm estimates and boundedness results of paraproducts. More specifically, we will use one-parameter paraproduct to analyze term III, V, II, bi-parameter paraproduct for term IV, VI, VII, and tri-parameter paraproduct for the last term VIII. The reader will easily see that when the number of parameters is more than three, analogous argument can be established.

\subsubsection{Term I}
As the functions in the pairing are all disjointly supported, following from the full kernel assumptions, one can argue similarly as in \cite{Ma} Lemma $7.1$ that there holds
\[
\begin{split}
&|\pair{T(h_{I_1}\otimes h_{I_2}\otimes s_{J_3I_3})}{s_{I_1J_1}\otimes s_{I_2J_2}\otimes h_{J_3}}|\\
&\quad \lesssim \frac{|I_1|^{1/2}}{|J_1|^{1/2}}\left(\frac{\ell(I_1)}{\ell(J_1)}\right)^{\delta/2}\frac{|I_2|^{1/2}}{|J_2|^{1/2}}\left(\frac{\ell(I_2)}{\ell(J_2)}\right)^{\delta/2}\frac{|J_3|^{1/2}}{|I_3|^{1/2}}\left(\frac{\ell(J_3)}{\ell(I_3)}\right)^{\delta/2}.
\end{split}
\]

We omit the details. Hence, term I can be realized in the form
\[
C\sum_{i_1=1}^\infty\sum_{i_2=1}^\infty\sum_{j_3=1}^\infty 2^{-i_1\delta/2}2^{-i_2\delta/2}2^{-j_3\delta/2}\pair{S^{i_10i_200j_3}f}{g}.
\]

\subsubsection{Term III, V, II}
Next we deal with term III (symmetric with term V) which can be written in the form
\[
\begin{split}
&\sum_{I_1\subsetneq J_1}\sum_{J_3\subsetneq I_3}\sum_{I_2\subsetneq J_2}\ave{h_{J_2}}_{I_2}\pair{T(h_{I_1}\otimes h_{I_2}\otimes s_{J_3I_3})}{s_{I_1J_1}\otimes 1\otimes h_{J_3}}\cdot\\
&\qquad\qquad\pair{f}{h_{I_1}\otimes h_{I_2}\otimes h_{I_3}}\pair{g}{h_{J_1}\otimes h_{J_2}\otimes h_{J_3}}\\
&=\sum_{I_1\subsetneq J_1}\sum_{J_3\subsetneq I_3}\sum_V\ave{\pair{g}{h_{J_1}\otimes h_{J_3}}_{1,3}}_V\pair{T(h_{I_1}\otimes h_{V}\otimes s_{J_3I_3})}{s_{I_1J_1}\otimes 1\otimes h_{J_3}}\cdot\\
&\qquad\qquad\pair{f}{h_{I_1}\otimes h_V\otimes h_{I_3}}.
\end{split}
\] 

It is not hard to show the correct normalization of the coefficient
\[
|\pair{T(h_{I_1}\otimes h_{V}\otimes s_{J_3I_3})}{s_{I_1J_1}\otimes 1\otimes h_{J_3}}|\lesssim\frac{|I_1|^{1/2}}{|J_1|^{1/2}}\left(\frac{\ell(I_1)}{\ell(J_1)}\right)^{\delta/2}\frac{|J_3|^{1/2}}{|I_3|^{1/2}}\left(\frac{\ell(J_3)}{\ell(I_3)}\right)^{\delta/2}|V|^{1/2},
\]
which means that term III can be realized in the form
\[
C\sum_{i_1=1}^\infty\sum_{j_3=1}^\infty 2^{-i_1\delta/2}2^{-j_3\delta/2}\pair{S^{i_10000j_3}f}{g}.
\]

As $S^{i_10000j_3}$ is a noncancellative shift, we need to show its boundedness separately, which requires a one-parameter BMO type estimate. Rewrite
\[
\begin{split}
&\sum_V\ave{\pair{g}{h_{J_1}\otimes h_{J_3}}_{1,3}}_V\pair{T(h_{I_1}\otimes h_{V}\otimes s_{J_3I_3})}{s_{I_1J_1}\otimes 1\otimes h_{J_3}}\pair{f}{h_{I_1}\otimes h_V\otimes h_{I_3}}\\
&=\sum_V\ave{\pair{g}{h_{J_1}\otimes h_{J_3}}_{1,3}}_V\pair{\pair{T^*(s_{I_1J_1}\otimes 1\otimes h_{J_3})}{h_{I_1}\otimes s_{J_3I_3}}_{1,3}}{h_V}_2\cdot\\
&\qquad\qquad\pair{\pair{f}{h_{I_1}\otimes h_{I_3}}_{1,3}}{h_V}_2\\
&=:C2^{-i_1\delta/2}2^{-j_3\delta/2}\pair{\pair{f}{h_{I_1}\otimes h_{I_3}}_{1,3}}{\Pi_{b_{I_1J_1J_3I_3}}(\pair{g}{h_{J_1}\otimes h_{J_3}}_{1,3})}_2\\
&=C2^{-i_1\delta/2}2^{-j_3\delta/2}\pair{h_{J_1}\otimes \Pi^*_{b_{I_1J_1J_3I_3}}(\pair{f}{h_{I_1}\otimes h_{I_3}}_{1,3})\otimes h_{J_3}}{g},
\end{split}
\]
where $b_{I_1J_1J_3I_3}=\pair{T^*(s_{I_1J_1}\otimes 1\otimes h_{J_3})}{h_{I_1}\otimes s_{J_3I_3}}_{1,3}/(C2^{-i_1\delta/2}2^{-j_3\delta/2})$, and $\Pi_{a}$ denotes a one-parameter paraproduct in the second variable defined as
\[
\Pi_b(f)(x_2)=\sum_V\pair{b}{h_V}_2\pair{f}{|V|^{-1/2}\chi_V}_2h_V(x_2)|V|^{-1/2}.
\]

Hence, one has
\[
\begin{split}
S^{i_10000j_3}f&=\sum_{J_1}\sum_{\substack{I_1\subset J_1\\\ell(I_1)=2^{-i_1}\ell(J_1)}}\sum_{I_3}\sum_{\substack{J_3\subset I_3\\\ell(J_3)=2^{-j_3}\ell(I_3)}}h_{J_1}\otimes \Pi^*_{b_{I_1J_1J_3I_3}}(\pair{f}{h_{I_1}\otimes h_{I_3}}_{1,3})\otimes h_{J_3}\\
&=:\sum_{J_1}\sum_{I_1\subset J_1}^{(i_1)}\sum_{I_3}\sum_{J_3\subset I_3}^{(j_3)}h_{J_1}\otimes \Pi^*_{b_{I_1J_1J_3I_3}}(\pair{f}{h_{I_1}\otimes h_{I_3}}_{1,3})\otimes h_{J_3}.
\end{split}
\]

One first obverses that there holds the following estimate:

\begin{lem}\label{oneparaBMO}
$\|b_{I_1J_1J_3I_3}\|_{BMO(\R^{d_2})}\lesssim \frac{|I_1|^{1/2}}{|J_1|^{1/2}}\frac{|J_3|^{1/2}}{|I_3|^{1/2}}.$
\end{lem}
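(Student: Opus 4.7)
The plan is to reduce $\|b_{I_1J_1J_3I_3}\|_{BMO(\R^{d_2})}$ to a bound on Haar coefficients via the dyadic Carleson characterization, estimate each Haar coefficient using the partial kernel representation, and close the loop with the BMO/WBP clause of the partial kernel assumption. From the definition of $b_{I_1J_1J_3I_3}$, for any dyadic cube $V\subset\R^{d_2}$ and Haar function $h_V$,
\[
C\cdot 2^{-i_1\delta/2}\,2^{-j_3\delta/2}\,\pair{b_{I_1J_1J_3I_3}}{h_V}_2=\pair{T^*(s_{I_1J_1}\otimes 1\otimes h_{J_3})}{h_{I_1}\otimes h_V\otimes s_{J_3I_3}}.
\]
In variable $1$, $h_{I_1}$ and $s_{I_1J_1}$ have disjoint supports ($I_1\subset Q_1$ versus $Q_1^c$), and in variable $3$, $h_{J_3}$ and $s_{J_3I_3}$ have disjoint supports. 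By the closure of our class under partial adjoints, $T^*$ satisfies the partial kernel assumption with subset $\{1,3\}$, so the right-hand side equals an integral against the partial kernel $K^{\{1,3\}}_{T^*,1,h_V}$.

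Next, exploit the mean-zero properties of $h_{I_1}$ and $h_{J_3}$ together with the goodness of $I_1\subsetneq J_1$ and $J_3\subsetneq I_3$. Passing to a further partial adjoint in variable $3$ if needed (so that the mean-zero Haar becomes the target variable on which the partial kernel's H\"older regularity is posited), subtract off the kernel at the centers $c_{I_1}$ and $c_{J_3}$ and apply the mixed size-H\"older bound from the partial kernel, using $|s_{I_1J_1}|\lesssim|J_1|^{-1/2}$ and $|s_{J_3I_3}|\lesssim|I_3|^{-1/2}$. This is carried out just as in the Term I estimate above (cf.\ \cite{Ma}, Lemma 7.1), only that the absolute constant is replaced by $|C^{\{1,3\}}_{T^*}(1,h_V)|$. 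The outcome is
\[
|\pair{T^*(s_{I_1J_1}\otimes 1\otimes h_{J_3})}{h_{I_1}\otimes h_V\otimes s_{J_3I_3}}|\lesssim|C^{\{1,3\}}_{T^*}(1,h_V)|\cdot\frac{|I_1|^{1/2}|J_3|^{1/2}}{|J_1|^{1/2}|I_3|^{1/2}}\cdot 2^{-i_1\delta/2}\,2^{-j_3\delta/2},
\]
the gained decay factors $(\ell(I_1)/\ell(J_1))^{\delta/2}$ and $(\ell(J_3)/\ell(I_3))^{\delta/2}$ being absorbed into $2^{-i_1\delta/2}$ and $2^{-j_3\delta/2}$; dividing by the normalization yields the pointwise Haar estimate $|\pair{b_{I_1J_1J_3I_3}}{h_V}_2|\lesssim|C^{\{1,3\}}_{T^*}(1,h_V)|\cdot|I_1|^{1/2}|J_3|^{1/2}/(|J_1|^{1/2}|I_3|^{1/2})$.

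Finally, the BMO/WBP clause of the partial kernel assumption (section \ref{partial}) applied with subset $\{1,3\}$ and $W=\emptyset$ asserts $\|C^{\{1,3\}}_{T^*}(1,\cdot)\|_{BMO(\R^{d_2})}\lesssim 1$, which via the standard Carleson characterization gives $\sum_{V\subset V_0}|C^{\{1,3\}}_{T^*}(1,h_V)|^2\lesssim|V_0|$ for every dyadic cube $V_0\subset\R^{d_2}$. Combining with the pointwise bound above,
\[
\frac{1}{|V_0|}\sum_{V\subset V_0}|\pair{b_{I_1J_1J_3I_3}}{h_V}_2|^2\lesssim\frac{|I_1||J_3|}{|J_1||I_3|},
\]
and taking the supremum over $V_0$ and all dyadic grids (together with the equivalence between uniformly dyadic and classical BMO recalled in section \ref{remark}) gives the stated bound. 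The main technical obstacle is the simultaneous H\"older-cancellation calculation in variables $1$ and $3$ with the appropriate partial adjoint chosen to place the mean-zero Haars in the target slots; everything else is a direct invocation of the partial kernel structure and its BMO/WBP clause.
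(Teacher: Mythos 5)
Your proof is correct, and its analytic core coincides with the paper's: reduce the Haar/atom pairing of $b_{I_1J_1J_3I_3}$ to the partial kernel representation with $V=\{1,3\}$ (legitimate since $h_{I_1}, s_{I_1J_1}$ and $h_{J_3}, s_{J_3I_3}$ are disjointly supported), run the mixed size--H\"older estimate exploiting the mean zero of $h_{I_1}, h_{J_3}$ and the goodness of $I_1, J_3$ exactly as in the Term I computation, and close with the $W=\emptyset$ clause of the WBP/BMO condition on the constant. The one genuine difference is the outer layer: the paper tests $b$ against an arbitrary mean-zero function $a$ with $\operatorname{spt}a\subset V$, $|a|\le 1$ (so the constant enters as $C^{\{1,3\}}(a,1)\lesssim\|a\|_{H^1}\lesssim|V|$ via $H^1$--$BMO$ duality), whereas you test against Haar functions, keep $|C^{\{1,3\}}_{T^*}(1,h_V)|$ explicit, and square-sum it over $V\subset V_0$ using the Carleson form of the same BMO clause, finishing with the Pipher--Ward equivalence of uniform dyadic and classical BMO. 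Both characterizations are recalled as equivalent in Section 2, so either is admissible; your dyadic route has the mild advantage that the constant appears directly in the $(1,\cdot)$ slot of the stated assumption (no adjoint in the second variable needed to match the clause), while the paper's atomic route avoids the grid-uniformity step at the end. Your remark that a further partial adjoint in variable $3$ is needed to place the mean-zero Haar in the slot where H\"older regularity is assumed is exactly the point the paper suppresses with ``we omit the details,'' and it is handled correctly since the hypotheses are assumed for every $T_S$.
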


\begin{proof}
For any cube $V$ in $\R^{d_2}$, let $a$ be a function on $\R^{d_2}$ with $\text{spt}a\subset V$, $|a|\leq 1$ and $\int a=0$. It suffices to show that
\[
|\pair{T(h_{I_1}\otimes a\otimes s_{J_3I_3})}{s_{I_1J_1}\otimes 1\otimes h_{J_3}}|\lesssim \frac{|I_1|^{1/2}}{|J_1|^{1/2}}\frac{|J_3|^{1/2}}{|I_3|^{1/2}}\left(\frac{\ell(I_1)}{\ell(J_1)}\right)^{\delta/2}\left(\frac{\ell(J_3)}{\ell(I_3)}\right)^{\delta/2}|V|.
\]

Since in the pairing, functions of the first and third variables are disjointly supported, one can use partial kernel representation, the standard kernel estimate of $K_{a,1}^{\{1,3\}}$ and boundedness of constant $C^{\{1,3\}}(a,1)$ to derive the desired estimate. We omit the details.
\end{proof}

This then implies that $\Pi^*_{b_{I_1J_1J_3I_3}}$ is bounded on $L^2(\mathbb{R}^{d_2})$ with norm bounded by $(|I_1|/|J_1|)^{1/2}(|J_3|/|I_3|)^{1/2}$. We now claim that $\|S^{i_10000j_3}f\|_2\lesssim\|f\|_2$. The idea behind is similar to Proposition $4.5$ in \cite{Ma}, but what we face here is more complicated as the relative sizes of cubes in different variables are of mixed type. 

\begin{prop}\label{bdd}
For arbitrary $i_1, j_3$, there holds
\[
\|\sum_{J_1}\sum_{I_1\subset J_1}^{(i_1)}\sum_{I_3}\sum_{J_3\subset I_3}^{(j_3)}h_{J_1}\otimes \Pi^*_{b_{I_1J_1J_3I_3}}(\pair{f}{h_{I_1}\otimes h_{I_3}}_{1,3})\otimes h_{J_3}\|^2_{L^2(\mathbb{R}^{\vec{d}})}\lesssim \|f\|^2_{L^2(\mathbb{R}^{\vec{d}})}.
\]
\end{prop}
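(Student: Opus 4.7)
\emph{Strategy.} I would decouple the three variables: use orthogonality of the cancellative Haar functions in variables $1$ and $3$ to turn the $L^2$ norm into a sum of one-parameter $L^2$ norms in variable $2$, apply Lemma~\ref{oneparaBMO} together with the standard $L^2$-boundedness of one-parameter dyadic paraproducts, and finally collapse the remaining dyadic sums by Cauchy--Schwarz and Bessel's inequality in variables $1,3$. No noncancellative Haar issue arises because $I_1\subsetneq J_1$ and $J_3\subsetneq I_3$ strictly, so all $h_{J_1},h_{J_3}$ appearing in the sum are genuinely cancellative.

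\emph{Key steps.} Write $F_{I_1,I_3}(x_2):=\pair{f}{h_{I_1}\otimes h_{I_3}}_{1,3}$ and observe that for fixed $J_3$ its $j_3$-th ancestor $I_3$ is uniquely determined. Orthogonality of $\{h_{J_1}\otimes h_{J_3}\}$ in $L^2(\R^{d_1}\times\R^{d_3})$ gives
\[
\|Sf\|_2^2 = \sum_{J_1,J_3}\Bigl\|\sum_{I_1\subset J_1}^{(i_1)}\Pi^*_{b_{I_1J_1J_3I_3}}(F_{I_1,I_3})\Bigr\|_{L^2(\R^{d_2})}^2.
\]
By the one-parameter paraproduct bound $\|\Pi^*_b\|_{L^2\to L^2}\lesssim\|b\|_{BMO}$ together with Lemma~\ref{oneparaBMO}, the inner norm is bounded via triangle inequality by $(|J_3|/|I_3|)^{1/2}\sum_{I_1\subset J_1}^{(i_1)}(|I_1|/|J_1|)^{1/2}\|F_{I_1,I_3}\|_2$. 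Cauchy--Schwarz, using the partition identity $\sum_{I_1\subset J_1}^{(i_1)}|I_1|/|J_1|=1$, then squares this to
\[
\Bigl\|\sum_{I_1\subset J_1}^{(i_1)}\Pi^*_{b_{I_1J_1J_3I_3}}(F_{I_1,I_3})\Bigr\|_{L^2(\R^{d_2})}^2 \lesssim \frac{|J_3|}{|I_3|}\sum_{I_1\subset J_1}^{(i_1)}\|F_{I_1,I_3}\|_{L^2(\R^{d_2})}^2.
\]
Summing over $J_1$ replaces $\sum_{J_1}\sum_{I_1\subset J_1}^{(i_1)}$ by $\sum_{I_1}$ (each $I_1$ has a unique $J_1$), and summing over $J_3$ using the dual identity $\sum_{J_3\subset I_3}^{(j_3)}|J_3|/|I_3|=1$ reduces the whole expression to $\sum_{I_1,I_3}\|F_{I_1,I_3}\|_{L^2(\R^{d_2})}^2$, which is $\leq\|f\|_{L^2(\R^{\vec d})}^2$ by Bessel applied fiberwise in $x_2$ and Fubini.

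\emph{Main obstacle.} The interesting structural point is the asymmetry of the shift (upward in variable $1$ since $I_1\subsetneq J_1$, downward in variable $3$ since $J_3\subsetneq I_3$), which forces the two ratio factors in Lemma~\ref{oneparaBMO} to play opposite roles: $(|I_1|/|J_1|)^{1/2}$ is absorbed by Cauchy--Schwarz over the children of $J_1$ \emph{before} summing in $J_1$, while $(|J_3|/|I_3|)^{1/2}$ is absorbed by the child-partition identity \emph{after} summing in $J_3$. The argument closes only because Lemma~\ref{oneparaBMO} supplies both decay factors in a symmetric way; losing either one would leave an uncancelled combinatorial blow-up $2^{d_1 i_1/2}$ or $2^{d_3 j_3/2}$. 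Otherwise the proof is essentially Haar orthogonality plus the standard one-parameter paraproduct bound--exactly the same mechanism that keeps a single dyadic shift of norm $\leq 1$.
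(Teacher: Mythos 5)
Your proof is correct and follows essentially the same route as the paper: Haar orthogonality in variables $1$ and $3$, the paraproduct bound $\|\Pi_b^*\|_{L^2\to L^2}\lesssim\|b\|_{BMO}$ combined with Lemma~\ref{oneparaBMO}, Cauchy--Schwarz against the partition identity $\sum_{I_1\subset J_1}^{(i_1)}|I_1|/|J_1|=1$, and collapse of the remaining sums. The only cosmetic difference is that the paper packages the final summation over $I_1$ and $J_1$ through the orthogonal projections $P_{J_1}^{i_1}$, whereas you invoke Bessel fiberwise in $x_2$ directly; these are equivalent.
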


\begin{proof}
The orthogonality of Haar systems implies that
\[
\begin{split}
&\|\sum_{J_1}\sum_{I_1\subset J_1}^{(i_1)}\sum_{I_3}\sum_{J_3\subset I_3}^{(j_3)}h_{J_1}\otimes \Pi^*_{b_{I_1J_1J_3I_3}}(\pair{f}{h_{I_1}\otimes h_{I_3}}_{1,3})\otimes h_{J_3}\|^2_{L^2(\mathbb{R}^{\vec{d}})}\\
&=\sum_{J_1}\sum_{J_3}\|\sum_{I_1\subset J_1}^{(i_1)}\Pi^*_{b_{I_1J_1J_3J_3^{(j_3)}}}(\pair{f}{h_{I_1}\otimes h_{J_3^{(j_3)}}}_{1,3})\|^2_{L^2(\mathbb{R}^{d_2})}\\
&\leq\sum_{J_1}\sum_{J_3}\left(\sum_{I_1\subset J_1}^{(i_1)}\|\Pi^*_{b_{I_1J_1J_3J_3^{(j_3)}}}(\pair{f}{h_{I_1}\otimes h_{J_3^{(j_3)}}}_{1,3})\|_{L^2(\mathbb{R}^{d_2})}\right)^2,
\end{split}
\]
where $J_3^{(j_3)}$ denotes the $j_3$-th dyadic ancestor of $J_3$. Now let $P_{J_1}^{i_1}$ denote the orthogonal projection from $L^2(\mathbb{R}^{d_1})$ onto the span of $\{h_{I_1}:\,I_1\subset J_1, \,\ell(I_1)=2^{-i_1}\ell(J_1)\}$, thus,
\[
\begin{split}
&\|\Pi^*_{b_{I_1J_1J_3J_3^{(j_3)}}}(\pair{f}{h_{I_1}\otimes h_{J_3^{(j_3)}}}_{1,3})\|_{L^2(\mathbb{R}^{d_2})}\lesssim\frac{|I_1|^{1/2}}{|J_1|^{1/2}}\frac{|J_3|^{1/2}}{|J_3^{(j_3)}|^{1/2}}\|\pair{f}{h_{I_1}\otimes h_{J_3^{(j_3)}}}_{1,3}\|_{L^2(\mathbb{R}^{d_2})}\\
&\leq\frac{|I_1|^{1/2}}{|J_1|^{1/2}}\frac{|J_3|^{1/2}}{|J_3^{(j_3)}|^{1/2}}\left(\int_{\mathbb{R}^{d_2}}\int_{I_1}|P_{J_1}^{i_1}(\pair{f}{h_{J_3^{(j_3)}}}_3)|^2\,dx_1dx_2\right)^{1/2}.
\end{split}
\]

Therefore, one has
\[
\begin{split}
&\|\sum_{J_1}\sum_{I_1\subset J_1}^{(i_1)}\sum_{I_3}\sum_{J_3\subset I_3}^{(j_3)}h_{J_1}\otimes \Pi^*_{b_{I_1J_1J_3I_3}}(\pair{f}{h_{I_1}\otimes h_{I_3}}_{1,3})\otimes h_{J_3}\|^2_{L^2(\mathbb{R}^{\vec{d}})}\\
&\lesssim \sum_{J_1}\sum_{J_3}\left(\sum_{I_1\subset J_1}^{(i_1)}\frac{|I_1|^{1/2}}{|J_1|^{1/2}}\frac{|J_3|^{1/2}}{|J_3^{(j_3)}|^{1/2}}\left(\int_{\mathbb{R}^{d_2}}\int_{I_1}|P_{J_1}^{i_1}(\pair{f}{h_{J_3^{(j_3)}}}_3)|^2\,dx_1dx_2\right)^{1/2}\right)^2,
\end{split}
\]
which by H\"older's inequality is bounded by
\[
\begin{split}
&\lesssim \sum_{J_1}\sum_{J_3}\left(\sum_{I_1\subset J_1}^{(i_1)}\frac{|I_1|}{|J_1|}\frac{|J_3|}{|J_3^{(j_3)}|}\right)\left(\sum_{I_1\subset J_1}^{(i_1)}\int_{\mathbb{R}^{d_2}}\int_{I_1}|P_{J_1}^{i_1}(\pair{f}{h_{J_3^{(j_3)}}}_3)|^2\,dx_1dx_2\right)\\
&=\sum_{J_3}\frac{|J_3|}{|J_3^{(j_3)}|}\sum_{J_1}\int_{\mathbb{R}^{d_2}}\int_{\mathbb{R}^{d_1}}|P_{J_1}^{i_1}(\pair{f}{h_{J_3^{(j_3)}}}_3)|^2\,dx_1dx_2\\
&=\sum_{J_3}\frac{|J_3|}{|J_3^{(j_3)}|}\|\pair{f}{h_{J_3^{(j_3)}}}_3\|^2_{L^2(\mathbb{R}^{d_1+d_2})},
\end{split}
\]
where the last step above follows from the orthogonality of $\{P_{J_1}^{i_1}\}_{J_1}$. Note that by reindexing $J_3^{(j_3)}$ as $I_3$, the RHS can be written as
\[
\sum_{I_3}\sum_{J_3\subset I_3}^{(j_3)}\frac{|J_3|}{|I_3|}\|\pair{f}{h_{I_3}}_3\|^2_{L^2(\mathbb{R}^{d_1+d_2})}=\|f\|^2_{L^2(\mathbb{R}^{\vec{d}})},
\]
which completes the proof.
\end{proof}

This finishes the discussion of term III. Though term II is not completely symmetric to III or V, it can be handled similarly by realized in a form of sums of terms involving one-parameter paraproducts and by using the following BMO lemma. The boundedness of the arising dyadic shifts then follows from a similar argument as Proposition \ref{bdd}. 

\begin{lem}
Define $b_{I_1J_1I_2J_2}=\pair{T(h_{I_1}\otimes h_{I_2}\otimes 1)}{s_{I_1J_1}\otimes s_{I_2J_2}}_{1,2}/(C2^{-i_1\delta/2}2^{-i_2\delta/2})$, then
\[
\|b_{I_1J_1I_2J_2}\|_{BMO(\R^{d_3})}\lesssim \frac{|I_1|^{1/2}}{|J_1|^{1/2}}\frac{|I_2|^{1/2}}{|J_2|^{1/2}}.
\]
\end{lem}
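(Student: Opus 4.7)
The plan is to adapt the duality plus partial kernel argument of Lemma \ref{oneparaBMO} to the present setting, where the disjoint-support hypothesis is available in two variables rather than one.

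By $H^1$-BMO duality on $\R^{d_3}$ (equivalently, by testing against cancellative atoms), it suffices to show that for any cube $V\subset\R^{d_3}$ and any function $a$ supported in $V$ with $|a|\leq 1$ and $\int a=0$,
\[
|\pair{T(h_{I_1}\otimes h_{I_2}\otimes 1)}{s_{I_1J_1}\otimes s_{I_2J_2}\otimes a}|\lesssim \frac{|I_1|^{1/2}}{|J_1|^{1/2}}\frac{|I_2|^{1/2}}{|J_2|^{1/2}}\Bigl(\frac{\ell(I_1)}{\ell(J_1)}\Bigr)^{\delta/2}\Bigl(\frac{\ell(I_2)}{\ell(J_2)}\Bigr)^{\delta/2}|V|.
\]
Since $\text{spt}\,h_{I_i}\subset Q_i$ and $\text{spt}\,s_{I_iJ_i}\subset J_i\setminus Q_i$ for $i=1,2$, the supports in the first two variables are disjoint, so the partial kernel representation with $V_{\text{par}}=\{1,2\}$ rewrites the left-hand side as
\[
\int K^{\{1,2\}}_{1,a}(x_1,y_1,x_2,y_2)\,h_{I_1}(y_1)h_{I_2}(y_2)s_{I_1J_1}(x_1)s_{I_2J_2}(x_2)\,dx_{1,2}\,dy_{1,2}.
\]

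Next I exploit the mean-zero property of $h_{I_1}$ and $h_{I_2}$ by subtracting the kernel values at the centers $c_{I_1}, c_{I_2}$, producing the four-term difference $\sum_{\Lambda\subset\{1,2\}}(-1)^{|\Lambda|}K^{\{1,2\},\Lambda}_{1,a}$ in the $y$-variables (after, if necessary, passing to the appropriate partial adjoint of $T$ so that the Haar cancellation falls on the correct side of the kernel). The mixed H\"older estimate with $W=\{1,2\}$ in the partial kernel assumption of section \ref{partial} controls this difference pointwise by
\[
C^{\{1,2\}}(1,a)\cdot\frac{|y_1-c_{I_1}|^\delta}{|x_1-c_{I_1}|^{d_1+\delta}}\cdot\frac{|y_2-c_{I_2}|^\delta}{|x_2-c_{I_2}|^{d_2+\delta}}.
\]
Integrating against $h_{I_i}$ and $s_{I_iJ_i}$ and using the goodness of $I_i$---which guarantees $|x_i-c_{I_i}|\gtrsim\ell(I_i)^{\gamma_i}\ell(J_i)^{1-\gamma_i}$ on $\text{spt}\,s_{I_iJ_i}$---produces, by the standard one-parameter computation of \cite{Hy, Ma}, the decay factor $\tfrac{|I_i|^{1/2}}{|J_i|^{1/2}}(\ell(I_i)/\ell(J_i))^{\delta/2}$ for each of $i=1,2$.

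Finally, the constant $C^{\{1,2\}}(1,a)$ is handled by invoking the WBP/BMO assumption on $C^V_S$ with $V=\{1,2\}$, $V^c=\{3\}$, and $W=\emptyset$, which yields $\|C^{\{1,2\}}(1,\cdot)\|_{BMO(\R^{d_3})}\lesssim 1$; hence $|C^{\{1,2\}}(1,a)|\lesssim|V|$ by $H^1$-BMO duality applied to the atomic hypotheses on $a$. Assembling these pieces gives the target bound. The principal obstacle is essentially bookkeeping: one has to execute the full four-term mixed H\"older condition in two variables simultaneously and verify that the goodness reduction decouples cleanly across both parameters, but no genuinely new multi-parameter difficulty arises beyond the bi-parameter argument of \cite{Ma}.
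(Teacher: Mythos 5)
Your proposal is correct and follows essentially the same route the paper intends: the paper dispatches this lemma by declaring its proof ``completely the same as Lemma \ref{oneparaBMO}'', i.e.\ test $b_{I_1J_1I_2J_2}$ against a mean-zero function $a$ supported on a cube $V\subset\R^{d_3}$ with $|a|\le 1$, invoke the partial kernel representation for $V=\{1,2\}$ (legitimate since $h_{I_i}$ and $s_{I_iJ_i}$ are disjointly supported for $i=1,2$), run the mixed H\"older/size estimates with the goodness of $I_1,I_2$ to extract the factors $\frac{|I_i|^{1/2}}{|J_i|^{1/2}}(\ell(I_i)/\ell(J_i))^{\delta/2}$, and bound $C^{\{1,2\}}(1,a)\lesssim|V|$ via the WBP/BMO hypothesis with $W=\emptyset$. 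The only detail you elide (but correctly delegate to the standard computation of \cite{Hy,Ma}) is that the goodness separation $|x_i-c_{I_i}|\gtrsim\ell(I_i)^{\gamma_i}\ell(J_i)^{1-\gamma_i}$ is available only when $\ell(I_i)<2^{-r}\ell(J_i)$; in the remaining boundedly many scales one splits into $3I_i$ and $(3I_i)^c$ and uses the size condition, exactly as the paper does in its proof of the corresponding lemma for term IV.
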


The proof of the lemma above is completely the same as Lemma \ref{oneparaBMO}, which is left to the reader.

\subsubsection{Term IV, VI, VII}

Now we turn to term IV (symmetric with term VI), which can be realized in a form involving bi-parameter paraproduct. Write
\[
\begin{split}
&\sum_{I_1\subsetneq J_1}\sum_{I_2\subsetneq J_2}\sum_{J_3\subsetneq I_3} \ave{h_{J_2}}_{I_2}\ave{h_{I_3}}_{J_3}\pair{T(h_{I_1}\otimes h_{I_2}\otimes 1)}{s_{I_1J_1}\otimes 1\otimes h_{J_3}}\cdot\\
&\qquad\qquad\pair{f}{h_{I_1}\otimes h_{I_2}\otimes h_{I_3}}\pair{g}{h_{J_1}\otimes h_{J_2}\otimes h_{J_3}}\\
&=\sum_{I_1\subsetneq J_1}\sum_{V}\sum_{W}\ave{\pair{g}{h_{J_1}\otimes h_W}_{1,3}}_V\ave{\pair{f}{h_{I_1}\otimes h_V}_{1,2}}_W\cdot\\
&\qquad\qquad\pair{T(h_{I_1}\otimes h_V\otimes 1)}{s_{I_1J_1}\otimes 1\otimes h_W},
\end{split}
\]
which is of the form
\[
C\sum_{i_1=1}^\infty 2^{-i_1\delta/2}\pair{S^{i_100000}f}{g},
\]
if one can prove that the following correct normalization holds true:
\[
|\pair{T(h_{I_1}\otimes h_V\otimes 1)}{s_{I_1J_1}\otimes 1\otimes h_W}|\lesssim \frac{|I_1|^{1/2}}{|J_1|^{1/2}}\left(\frac{\ell(I_1)}{\ell(J_1)}\right)^{\delta/2}|V|^{1/2}|W|^{1/2}.
\]

To see this, recall that by the partial kernel representation,
\[
\begin{split}
&\pair{T(h_{I_1}\otimes h_V\otimes 1)}{s_{I_1J_1}\otimes 1\otimes h_W}=\pair{T_2(h_{I_1}\otimes 1\otimes 1)}{s_{I_1J_1}\otimes h_V\otimes h_W}\\
&=\int_{\mathbb{R}^{d_1}}\int_{\mathbb{R}^{d_1}}K^{\{1\}}_{2,1\otimes 1,h_V\otimes h_W}(x_1,y_1)h_{I_1}(y_1)s_{I_1J_1}(x_1)\,dx_1dy_1,
\end{split}
\]
where the partial kernel $K^{\{1\}}_{2,1\otimes 1,h_V\otimes h_W}$ satisfies standard kernel estimates bounded by constant $C^{\{1\}}(1\otimes 1,h_V\otimes h_W)$, where additionally we have the assumption that $C^{\{1\}}(1\otimes 1,\cdot)$ is a function in $BMO_{prod}(\mathbb{R}^{d_2}\times\mathbb{R}^{d_3})$ with norm $\lesssim 1$. Hence, there holds $C^{\{1\}}(1\otimes 1,h_V\otimes h_W)\lesssim |V|^{1/2}|W|^{1/2}$, and the correct normalization of the coefficient then follows from a completely same argument as Lemma $3.10$ in \cite{Hy}.

It is then left to demonstrate the uniform boundedness of the shift $S^{i_100000}$. Rewrite
\[
\begin{split}
&\sum_{V}\sum_{W}\ave{\pair{g}{h_{J_1}\otimes h_W}_{1,3}}_V\ave{\pair{f}{h_{I_1}\otimes h_V}_{1,2}}_W\pair{T(h_{I_1}\otimes h_V\otimes 1)}{s_{I_1J_1}\otimes 1\otimes h_W}\\
&=C2^{-i_1\delta/2}\pair{h_{J_1}\otimes\Pi_{b_{I_1J_1}}(\pair{f}{h_{I_1}}_1)}{g},
\end{split}
\]
where $b_{I_1J_1}:=\pair{T_2(h_{I_1}\otimes 1\otimes 1)}{s_{I_1J_1}}_1/(C2^{-i_1\delta/2})$. The bi-parameter paraproduct
\[
\Pi_{b}(f):=\sum_{V,W}\pair{b}{h_V\otimes h_W}_{2,3}\pair{f}{h_V\otimes h_W^1}_{2,3}h_V^1\otimes h_W|V|^{-1/2}|W|^{-1/2},
\]
where $h_V^1,h_W^1$ are noncancellative Haar functions defined as $|V|^{-1/2}\chi_V, |W|^{-1/2}\chi_W$, respectively. Since the boundedness of $\Pi_{b_{I_1J_1}}$ implies the uniform boundedness of $S^{i_100000}$ similarly as in Proposition \ref{bdd}, it thus suffices to prove the following result:

\begin{lem}
$\|b_{I_1J_1}\|_{BMO_{prod}(\R^{d_2}\times\R^{d_3})}\lesssim\frac{|I_1|^{1/2}}{|J_1|^{1/2}}$.
\end{lem}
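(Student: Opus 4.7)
The strategy is to verify the lemma through the Carleson measure characterization of dyadic product BMO (Pipher--Ward \cite{PW}, Treil \cite{Tr}) already quoted in section \ref{Assump}. It suffices to show that for any dyadic grid $\mathcal{D}_2\times\mathcal{D}_3$ on $\R^{d_2}\times\R^{d_3}$ and any open set $\Omega$ of finite measure,
\[
\sum_{\substack{R=P\times Q\in\mathcal{D}_2\times\mathcal{D}_3\\R\subset\Omega}}|\pair{b_{I_1J_1}}{h_P\otimes h_Q}|^2\;\lesssim\;\frac{|I_1|}{|J_1|}\,|\Omega|,
\]
uniformly in $\mathcal{D}_2,\mathcal{D}_3$. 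The key technical step is a pointwise estimate on each Haar coefficient that decouples the variable-$1$ smallness from a factor that is itself the Haar coefficient of a product BMO function of $(x_2,x_3)$.

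The plan for the pointwise estimate runs as follows. Unfolding the definition and using that $h_{I_1}$ and $s_{I_1J_1}$ are disjointly supported in variable $1$, one applies the partial kernel representation of $T_2$ with kernel-index set $\{1\}$ to write
\[
\pair{T_2(h_{I_1}\otimes 1\otimes 1)}{s_{I_1J_1}\otimes h_P\otimes h_Q}=\int\!\!\int K^{\{1\}}_{T_2,1\otimes 1,h_P\otimes h_Q}(x_1,y_1)\,h_{I_1}(y_1)s_{I_1J_1}(x_1)\,dx_1dy_1.
\]
Exploiting the mean zero of $h_{I_1}$ and the $\delta$-Hölder condition (with $W=\{1\}$) satisfied by $K^{\{1\}}_{T_2,1\otimes 1,h_P\otimes h_Q}$, together with the goodness of $I_1\subset J_1$ and the support bound $\mathrm{spt}\,s_{I_1J_1}\subset Q_1^c$, a standard one-parameter calculation as in Lemma \ref{oneparaBMO} will yield
\[
|\pair{b_{I_1J_1}}{h_P\otimes h_Q}|\;\lesssim\;\frac{|I_1|^{1/2}}{|J_1|^{1/2}}\,\bigl|C^{\{1\}}_{T_2}(1\otimes 1,\,h_P\otimes h_Q)\bigr|,
\]
where the extra factor $(\ell(I_1)/\ell(J_1))^{\delta/2}$ extracted from kernel smoothness exactly absorbs the normalization $2^{-i_1\delta/2}$ built into $b_{I_1J_1}$.

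With this pointwise estimate in hand, the Carleson sum is summed by invoking the partial kernel WBP/BMO assumption of section \ref{partial} applied to $T_2$ with the choices $V=\{1\}$, $W=\emptyset$, which gives exactly $\|C^{\{1\}}_{T_2}(1\otimes 1,\cdot)\|_{BMO_{prod}(\R^{d_2}\times\R^{d_3})}\lesssim 1$. By the Carleson characterization once more, $\sum_{R\subset\Omega}|C^{\{1\}}_{T_2}(1\otimes 1,h_P\otimes h_Q)|^2\lesssim|\Omega|$, and the two estimates combine to give the desired bound.

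The main obstacle will be the rigorous pointwise estimate on $|\pair{b_{I_1J_1}}{h_P\otimes h_Q}|$: the partial kernel representation requires genuine disjoint supports only in variable $1$, while $1\otimes 1$ is neither compactly supported nor cancellative in variables $2,3$. This is handled by the splitting argument of section \ref{remark}, decomposing $1\otimes 1$ into near and far pieces relative to $3P\times 3Q$ and using either the WBP-type component of the partial kernel assumption (for the near piece) or the full kernel Hölder condition (for the far pieces). The clean separation of the answer into a variable-$1$ factor and a $(P,Q)$-coefficient that inherits product BMO from the assumption on $C^{\{1\}}_{T_2}(1\otimes 1,\cdot)$ is the conceptual heart of the argument.
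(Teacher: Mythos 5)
Your plan follows essentially the same route as the paper: the Carleson-measure characterization of product BMO, the partial kernel representation in variable $1$ justified by the disjoint supports of $h_{I_1}$ and $s_{I_1J_1}$, the mean-zero/H\"older argument with goodness of $I_1$ to extract the factor $\frac{|I_1|^{1/2}}{|J_1|^{1/2}}\bigl(\ell(I_1)/\ell(J_1)\bigr)^{\delta/2}$ per Haar coefficient, and finally the WBP/BMO hypothesis on $C^{\{1\}}_2(1\otimes 1,\cdot)$ to sum the Carleson packing. The only detail you gloss over is the regime $\ell(I_1)\geq 2^{-r}\ell(J_1)$, where goodness gives no separation and the paper instead splits the $x_1$-integral into $3I_1\setminus I_1$ (size condition) and $(3I_1)^c$ (H\"older condition), using that the gain $\bigl(\ell(I_1)/\ell(J_1)\bigr)^{\delta/2}\sim 1$ there; this is a routine addition subsumed by your reference to the standard one-parameter calculation.
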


\begin{proof}
To see this, one needs to refer to the partial kernel assumption and the WBP/BMO conditions of the constant. More specifically, we will prove that for any dyadic grids $\mathcal{D}_2,\mathcal{D}_3$, and any open set $\Omega\subset\mathbb{R}^{d_2}\times\mathbb{R}^{d_3}$ with finite measure, there holds
\[
\frac{1}{|\Omega|}\sum_{\substack{R\subset\Omega, R\in\mathcal{D}_2\times\mathcal{D}_3\\R=J_2\times J_3}}|\pair{T_2(h_{I_1}\otimes 1\otimes 1)}{s_{I_1J_1}\otimes h_{J_2}\otimes h_{J_3}}|^2/(C^22^{-i_1\delta})\lesssim \frac{|I_1|}{|J_1|}.
\]

Due to the disjoint supports of $h_{I_1}$ and $s_{I_1J_1}$, one has
\begin{equation}\label{Carleson}
\pair{T_2(h_{I_1}\otimes 1\otimes 1)}{s_{I_1J_1}\otimes h_{J_2}\otimes h_{J_3}}=\int_{I_1}\int_{Q_1^c}K^{\{1\}}_{2,1\otimes 1,h_{J_2}\otimes h_{J_3}}(x_1,y_1)h_{I_1}(y_1)s_{I_1J_1}(x_1)\,dx_1dy_1.
\end{equation}

If $\ell(I_1)<2^{-r}\ell(J_1)$, the goodness of $I_1$ implies $d(I_1, Q_1^c)\geq\ell(J_1)(\ell(I_1)/\ell(J_1))^{\gamma_1}$. Hence, according to the mean zero property of $h_{I_1}$ and H\"older condition of the partial kernel, one has
\[
\begin{split}
&|(\ref{Carleson})|\\
&=|\int_{I_1}\int_{Q_1^c}[K^{\{1\}}_{2,1\otimes 1,h_{J_2}\otimes h_{J_3}}(x_1,y_1)-K^{\{1\}}_{2,1\otimes 1,h_{J_2}\otimes h_{J_3}}(x_1,c(I_1))]h_{I_1}(y_1)s_{I_1J_1}(x_1)\,dx_1dy_1|\\
&\lesssim C^{\{1\}}_2(1\otimes 1,h_{J_2}\otimes h_{J_3})\|h_{I_1}\|_1\|s_{I_1J_1}\|_{\infty}|\int_{Q_1^c}\frac{\ell(I_1)^\delta}{d(x_1,I_1)^{d_1+\delta}}\,dx_1|\\
&\lesssim C^{\{1\}}_2(1\otimes 1,h_{J_2}\otimes h_{J_3})\frac{|I_1|^{1/2}}{|J_1|^{1/2}}\left(\frac{\ell(I_1)}{\ell(J_1)}\right)^{\delta/2}.
\end{split}
\]

If $\ell(I_1)\geq 2^{-r}\ell(J_1)$ instead, we further split (\ref{Carleson}) into two parts. Write
\[
\begin{split}
&|(\ref{Carleson})|\\
&\leq\int_{3I_1\setminus I_1}|\int_{I_1}K^{\{1\}}_{2,1\otimes 1,h_{J_2}\otimes h_{J_3}}(x_1,y_1)h_{I_1}(y_1)\,dy_1||s_{I_1J_1}(x_1)|\,dx_1\\
&\, +\int_{(3I_1)^c}|\int_{I_1}[K^{\{1\}}_{2,1\otimes 1,h_{J_2}\otimes h_{J_3}}(x_1,y_1)-K^{\{1\}}_{2,1\otimes 1,h_{J_2}\otimes h_{J_3}}(x_1,c(I_1))]h_{I_1}(y_1)\,dy_1||s_{I_1J_1}(x_1)|\,dx_1\\
&\lesssim C^{\{1\}}_{2}(1\otimes 1, h_{J_2}\otimes h_{J_3})\|h_{I_1}\|_{\infty}\|s_{I_1J_1}\|_{\infty}\int_{3I_1\setminus I_1}\int_{I_1}\frac{1}{|x_1-y_1|^{d_1}}\,dy_1dx_1\\
&\quad+C^{\{1\}}_2(1\otimes 1,h_{J_2}\otimes h_{J_3})\|h_{I_1}\|_1\|s_{I_1J_1}\|_{\infty}\int_{(3I_1)^c}\frac{\ell(I_1)^\delta}{d(x_1,I_1)^{d_1+\delta}}\,dx_1\\
&\lesssim C^{\{1\}}_2(1\otimes 1, h_{J_2}\otimes h_{J_3})\frac{|I_1|^{1/2}}{|J_1|^{1/2}}\lesssim C^{\{1\}}_2(1\otimes 1, h_{J_2}\otimes h_{J_3})\frac{|I_1|^{1/2}}{|J_1|^{1/2}}\left(\frac{\ell(I_1)}{\ell(J_1)}\right)^{\delta/2}.
\end{split}
\]

Combining the two cases, we obtain
\[
|(\ref{Carleson})|\lesssim C^{\{1\}}_2(1\otimes 1,h_{J_2}\otimes h_{J_3})\frac{|I_1|^{1/2}}{|J_1|^{1/2}}2^{-i_1\delta/2},
\]
which then implies that
\[
\begin{split}
&\frac{1}{|\Omega|}\sum_{\substack{R\subset\Omega, R\in\mathcal{D}_2\times\mathcal{D}_3\\R=J_2\times J_3}}|\pair{T_2(h_{I_1}\otimes 1\otimes 1)}{s_{I_1J_1}\otimes h_{J_2}\otimes h_{J_3}}|^2/(C^22^{-i_1\delta})\\
&\quad\lesssim \frac{1}{|\Omega|}\sum_{\substack{R\subset\Omega, R\in\mathcal{D}_2\times\mathcal{D}_3\\R=J_2\times J_3}}|C^{\{1\}}_2(1\otimes 1,h_{J_2}\otimes h_{J_3})|^2\frac{|I_1|}{|J_1|}\lesssim \frac{|I_1|}{|J_1|},
\end{split}
\]
where the last step follows from the WBP/BMO assumption that $C^{\{1\}}_2(1\otimes 1,\cdot)$ is a product BMO function with norm $\lesssim 1$.

\end{proof}

This finishes the discussion the term IV. Similarly, term VII can also be organized as a sum of terms involving bi-parameter paraproducts, where the BMO function and the correct boundedness are given in the following lemma, whose proof is left to the reader.

\begin{lem}
Define $b_{J_3I_3}=\pair{T^*(1\otimes 1\otimes h_{J_3})}{s_{J_3I_3}}_3/(C2^{-j_3\delta/2})$, then,
\[
\|b_{J_3I_3}\|_{BMO_{prod}(\R^{d_1}\times\R^{d_2})}\lesssim \frac{|J_3|^{1/2}}{|I_3|^{1/2}}.
\]
\end{lem}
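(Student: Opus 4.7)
The proof plan mirrors the previous lemma, with the roles of the first and third variables interchanged and the goodness of $J_3$ (rather than $I_1$) driving the estimate. Since the assumptions are symmetric under partial adjoints, $T^* = T_{\{1,2,3\}}$ satisfies all the full and partial kernel assumptions, so we may proceed as before.

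First, I would invoke the Carleson measure characterization of $BMO_{prod}(\R^{d_1}\times\R^{d_2})$: it suffices to show that, uniformly in dyadic grids $\mathcal{D}_1,\mathcal{D}_2$ and open sets $\Omega\subset\R^{d_1}\times\R^{d_2}$ of finite measure,
\[
\frac{1}{|\Omega|}\sum_{\substack{R\subset\Omega,\,R\in\mathcal{D}_1\times\mathcal{D}_2\\R=J_1\times J_2}}\bigl|\pair{T^*(1\otimes 1\otimes h_{J_3})}{h_{J_1}\otimes h_{J_2}\otimes s_{J_3I_3}}\bigr|^2\big/\bigl(C^2 2^{-j_3\delta}\bigr)\lesssim\frac{|J_3|}{|I_3|}.
\]
Because $\mathrm{spt}\,h_{J_3}\subset J_3\subset Q_3$ while $\mathrm{spt}\,s_{J_3I_3}\subset Q_3^c$, the third-variable supports are disjoint, so the partial kernel representation applies with $V=\{3\}$, giving
\[
\pair{T^*(1\otimes 1\otimes h_{J_3})}{h_{J_1}\otimes h_{J_2}\otimes s_{J_3I_3}}=\int_{Q_3^c}\int_{J_3} K^{\{3\}}_{\{1,2,3\},\,1\otimes 1,\,h_{J_1}\otimes h_{J_2}}(x_3,y_3)\,h_{J_3}(y_3)\,s_{J_3I_3}(x_3)\,dy_3\,dx_3,
\]
whose partial kernel obeys the one-parameter size/H\"older estimate with multiplicative constant $C^{\{3\}}_{\{1,2,3\}}(1\otimes 1,\,h_{J_1}\otimes h_{J_2})$.

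Next I would bound this pairing pointwise in $(J_1,J_2)$ by splitting on the relative size of $\ell(J_3)$ and $\ell(I_3)$, exactly as in the previous lemma but with $J_3$ playing the role of $I_1$. If $\ell(J_3)<2^{-r}\ell(I_3)$, the goodness of $J_3$ yields $d(J_3,Q_3^c)\geq\ell(I_3)(\ell(J_3)/\ell(I_3))^{\gamma_3}$, and combining the mean-zero property of $h_{J_3}$ with the H\"older condition on the partial kernel produces a factor $(\ell(J_3)/\ell(I_3))^{\delta/2}=2^{-j_3\delta/2}$ times $|J_3|^{1/2}/|I_3|^{1/2}$. If instead $\ell(J_3)\geq 2^{-r}\ell(I_3)$, I split the $x_3$-integration into $3J_3\setminus J_3$ (use the size bound together with $\|h_{J_3}\|_\infty\|s_{J_3I_3}\|_\infty$) and $(3J_3)^c$ (use mean zero of $h_{J_3}$ plus the H\"older condition), and absorb the missing power into $2^{-j_3\delta/2}$ using $\ell(J_3)\sim\ell(I_3)$. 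Both cases yield
\[
\bigl|\pair{T^*(1\otimes 1\otimes h_{J_3})}{h_{J_1}\otimes h_{J_2}\otimes s_{J_3I_3}}\bigr|\lesssim C^{\{3\}}_{\{1,2,3\}}\bigl(1\otimes 1,\,h_{J_1}\otimes h_{J_2}\bigr)\frac{|J_3|^{1/2}}{|I_3|^{1/2}}\,2^{-j_3\delta/2}.
\]

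Finally, plugging this into the Carleson sum cancels the factor $2^{-j_3\delta}$ and reduces matters to
\[
\frac{1}{|\Omega|}\sum_{\substack{R\subset\Omega\\R=J_1\times J_2}}\bigl|C^{\{3\}}_{\{1,2,3\}}(1\otimes 1,\,h_{J_1}\otimes h_{J_2})\bigr|^2\lesssim 1,
\]
which is precisely the WBP/BMO hypothesis of section \ref{partial} applied with $V=\{3\}$, $W=\emptyset$: the function $C^{\{3\}}_{\{1,2,3\}}(1\otimes 1,\cdot)$ lies in $BMO_{prod}(\R^{d_1}\times\R^{d_2})$ with norm $\lesssim 1$. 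The only subtle point, as in the preceding lemma, is the case $\ell(J_3)\geq 2^{-r}\ell(I_3)$, where a direct H\"older bound is unavailable and one must combine a near-diagonal size estimate with a far-field H\"older estimate; this is the main (and only) technical step where care is needed.
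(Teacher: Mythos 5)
Your argument is correct and is exactly the proof the paper intends: it leaves this lemma to the reader as the mirror image of the preceding one (for $b_{I_1J_1}$), and you have carried out that mirror argument faithfully --- passing to $T^*=T_{\{1,2,3\}}$ so the WBP/BMO hypothesis applies with $V=\{3\}$, $W=\emptyset$, using the Carleson characterization, the goodness of $J_3$, and the two-case split on $\ell(J_3)$ versus $2^{-r}\ell(I_3)$. No gaps.
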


\subsubsection{Term VIII}

In order to deal with the last term, one needs to realize it into the desired form using tri-parameter paraproducts and apply the assumed mixed BMO/WBP conditions. More specifically, write
\[
\begin{split}
&\sum_{I_1\subsetneq J_1}\sum_{I_2\subsetneq J_2}\sum_{J_3\subsetneq I_3}\ave{h_{J_1}}_{I_1}\ave{h_{J_2}}_{I_2}\ave{h_{I_3}}_{J_3}\pair{T_3^*(1)}{h_{I_1}\otimes h_{I_2}\otimes h_{J_3}}\cdot\\
&\qquad\qquad\qquad\pair{f}{h_{I_1}\otimes h_{I_2}\otimes h_{I_3}}\pair{g}{h_{J_1}\otimes h_{J_2}\otimes h_{J_3}}\\
&=\sum_{K,V,W}\ave{\pair{g}{h_W}_3}_{K\times V}\ave{\pair{f}{h_K\otimes h_V}_{1,2}}_W\pair{T_3^*(1)}{h_K\otimes h_V\otimes h_W}\\
&=\sum_{K,V,W}\pair{T_3^*(1)}{h_K\otimes h_V\otimes h_W}\pair{f}{h_K\otimes h_V\otimes h_W^1}\cdot\\
&\qquad\qquad\qquad\pair{g}{h_K^1\otimes h_V^1\otimes h_W}|K|^{-1/2}|V|^{-1/2}|W|^{-1/2}\\
&=:\pair{\Pi_{T_3^*(1)}f}{g},
\end{split}
\]
where the tri-parameter paraproduct above is defined as
\[
\Pi_b(f):=\sum_{K,V,W}\pair{b}{h_K\otimes h_V\otimes h_W}\pair{f}{h_K\otimes h_V\otimes h_W^1}h_K^1\otimes h_V^1\otimes h_W|K|^{-1/2}|V|^{-1/2}|W|^{-1/2}.
\]

A hybrid square/maximal function argument shows that in the setting of arbitrarily many parameters, the analogue of paraproduct $\Pi_b$ defined above is always bounded on $L^2$ for product BMO symbol function $b$. Since it is one of our mixed BMO/WBP assumptions that $T_3^*(1)\in BMO$, term VIII can thus be realized of the form $C\pair{S^{000000}f}{g}$. And the proof of the case Inside/Inside/Inside is therefore complete.

Now one can see that for estimate of other cases where not all the pairs of cubes are nested, less multi-parameter paraproduct type estimates are involved. One just needs to carefully apply the various standard kernel assumptions to make things work, which shouldn't be hard once we've seen what is happening in this more difficult case. It is also not hard to observe that our argument can be easily adapted to handle all the different mixed cases due to the symmetry of our conditions formulated at the beginning of the paper, hence the proof of Theorem \ref{Repre} is complete.

Before ending the section, we emphasize that unlike \cite{Ma}, in the setting of more than two parameters, one has to deal with "partial type" multi-parameter paraproducts (for example for term IV, VI, VII above) in addition to the classical one-parameter ones in the discussion of  the above and other cases. This explains why one needs to formulate the full kernel, partial kernel, BMO/WBP assumptions for the operator $T$ in such a particular way as we did.

\section{Comparison to Journ\'e's class}\label{Journe}

The first general enough class of bi-parameter singular integral operators containing non-convolution type operators was established by Journ\'e in \cite{Jo}, where he proved a bi-parameter $T1$ theorem as well. It is also pointed out in \cite{Jo} that, by induction, his approach can be generalized to arbitrarily many parameters. 

\begin{defn}\label{jsio}
Let $T: C_0^\infty(\R^{d_1})\otimes C_0^\infty(\R^{d_2})\rightarrow [C_0^\infty(\R^{d_1})\otimes C_0^\infty(\R^{d_2})]'$ be a continuous linear mapping. It is a \emph{Journ\'e type bi-parameter $\delta$-SIO} if there exists a pair $(K_1, K_2)$ of $\delta CZ$-$\delta$-standard kernels so that, for all $f_1,g_1\in C_0^\infty(\R^{d_1})$ and $f_2,g_2\in C_0^\infty(\R^{d_2})$,
\begin{equation}
\pair{T(f_1\otimes f_2)}{g_1\otimes g_2}=\int f_1(y_1)\pair{K_1(x_1,y_1)f_2}{g_2}g_1(x_1)\,dx_1dy_1
\end{equation} 
when $\text{spt}f_1\cap\text{spt}g_1=\emptyset$;
\begin{equation}
\pair{T(f_1\otimes f_2)}{g_1\otimes g_2}=\int f_2(y_2)\pair{K_2(x_2,y_2)f_1}{g_1}g_2(x_2)\,dx_2dy_2
\end{equation} 
when $\text{spt}f_2\cap\text{spt}g_2=\emptyset$.
\end{defn}

Recall that a $\delta CZ$-$\delta$-standard kernel is a standard kernel with parameter $\delta$ whose value is in the Banach space $\delta CZ$, the space of Calder\'on-Zygmund operators equipped with norm $\|T\|_{L^2\rightarrow L^2}+\|K\|$.

Let $T_1$ denote the partial adjoint $T_S$ where $S=\{1\}$, then it is easy to see that $T_1$ is also a Journ\'e type $\delta$-SIO if $T$ is. And a Journ\'e type $\delta$-SIO $T$ is called a \emph{Journ\'e type bi-parameter $\delta$-CZO} if both $T, T_1$ are bounded on $L^2$, associated with the norm $\|T\|_{L^2\rightarrow L^2}+\|T_1\|_{L^2\rightarrow L^2}+\|K_1\|_{\delta CZ}+\|K_2\|_{\delta CZ}$.

By induction, one can define \emph{Journ\'e type $n$-parameter SIO} accordingly. 

It is recently proved by Grau de la Herran in \cite{Gr} that in the bi-parameter setting, under the additional assumption that $T$ is bounded on $L^2$, $T$ is a Journ\'e type $\delta$-SIO satisfying certain WBP if and only if it satisfies Martikainen's mixed type conditions in \cite{Ma}. In the following, we reformulate this theorem without any assumption of the $L^2$ boundedness and prove it in the multi-parameter setting. In \cite{Gr}, the $L^2$ boundedness is used only to compare the two different formulations of WBP. However, in both Journ\'e's and our class of singular integrals, the WBP enter only in the context of the boundedness of the operator.

In the proof of Theorem \ref{equiv}, one of the intrinsic new difficulties is that some type of multi-parameter $T1$ theorem is needed, namely Corollary \ref{Cor}. The main theorem of this section is the following:

\begin{thm}\label{equiv}
$T$ is an $n$-parameter singular integral operator satisfying both the full kernel and partial kernel assumptions (see section \ref{full}, \ref{partial}) if and only if it is a Journ\'e type $n$-parameter SIO (see Definition \ref{jsio}).
\end{thm}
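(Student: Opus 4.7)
The plan is to proceed by induction on the number of parameters $n$. The base case $n=2$ is essentially the result of Grau de la Herran \cite{Gr} (as remarked before the theorem, his use of $L^2$ boundedness only enters when comparing two formulations of WBP, which can be dispensed with at the level of the SIO class). Assume the equivalence for $n-1$ parameters; I then establish both directions for $n$ parameters.

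For the forward direction (our class $\Rightarrow$ Journ\'e), given $T$ satisfying both kernel assumptions, I would define, for each index $i \in \{1, \ldots, n\}$ and each pair $(x_i, y_i)$ with $x_i \neq y_i$, an $(n-1)$-parameter operator $K_i(x_i, y_i)$ acting on functions of the remaining variables by
\[
\pair{K_i(x_i, y_i) f_{V^c}}{g_{V^c}} := K^{\{i\}}_{f_{V^c}, g_{V^c}}(x_i, y_i),
\]
where $V = \{i\}$ and $K^{\{i\}}$ is the partial kernel of $T$. The task is to verify that $K_i(x_i, y_i)$ is a Journ\'e type $(n-1)$-parameter $\delta$-CZO. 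I would first show that $K_i(x_i, y_i)$ lies in our $(n-1)$-parameter SIO class: the full kernel for $K_i(x_i, y_i)$ is extracted from the structure of $K^{\{i\}}_{f_{V^c}, g_{V^c}}(x_i, y_i)$ as a function of $f_{V^c}, g_{V^c}$, and the partial kernels for $V' \subsetneq \{1, \ldots, n\} \setminus \{i\}$ follow from applying the partial kernel assumption on $T$ with $V = V' \cup \{i\}$, reading off the mixed size-H\"older conditions and the WBP/BMO bounds on the slice constants from the corresponding conditions on $T$. By the inductive hypothesis, $K_i(x_i, y_i)$ is then a Journ\'e type $(n-1)$-parameter SIO. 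To upgrade it to a Journ\'e CZO with good $L^2$ bounds (so that the kernel takes values in $\delta CZ$), I would invoke Corollary \ref{Cor}, which requires the mixed BMO/WBP assumptions of Section \ref{BMOWBP} for $K_i(x_i, y_i)$; these are once again inherited from Ou's WBP/BMO conditions applied to the constants $C^V_S$ of $T$ across all relevant $V \ni i$ and $S$. Finally, the regularity of $x_i \mapsto K_i(x_i, y_i)$ as a $\delta CZ$-valued kernel in $(x_i, y_i)$ follows from the mixed H\"older conditions with $W \ni i$ in the partial kernel assumption on $T$.

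For the reverse direction (Journ\'e $\Rightarrow$ ours), I would iteratively unfold Journ\'e's operator-valued kernel representation. Given $f, g$ with pairwise disjoint supports in all variables, successive application of $K_1, K_2, \ldots, K_n$ (using at each stage that the resulting intermediate slice is itself a Journ\'e type $(n-k)$-parameter CZO by induction) yields the full kernel representation, while the mixed size-H\"older estimates follow from combining the standard kernel estimates of each $K_i$ in the $\delta CZ$ Banach space at each level. For the partial kernel representation on $V \subsetneq \{1, \ldots, n\}$, one iterates the kernel representations only over indices $i \in V$, and the resulting constant $C^V(f_{V^c}, g_{V^c})$ becomes the pairing of the residual $(n-|V|)$-parameter Journ\'e operator against the tensor product $f_{V^c} \otimes g_{V^c}$. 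The requisite WBP/BMO bounds on $C^V$ then follow from $L^2$-boundedness together with the fact (inherent in the inductive definition of Journ\'e type CZOs) that such operators map tensors of characteristic functions and $1$'s into product BMO.

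The main obstacle is the forward direction, specifically verifying the mixed BMO/WBP hypotheses of Corollary \ref{Cor} for each slice operator $K_i(x_i, y_i)$ uniformly in $(x_i, y_i)$. This requires careful bookkeeping: for each subset $W \subset \{1, \ldots, n\} \setminus \{i\}$ one must match the product BMO norm bound needed by the slice to an appropriate WBP/BMO condition on the multi-variable constants $C^V_S$ of $T$. The assumed symmetry of all hypotheses under every partial adjoint $T_S$ is essential here, since Corollary \ref{Cor} requires the BMO/WBP conditions on all partial adjoints of the slice, which are precisely read off from the conditions on the various $T_S$ with $i \in S$ or $i \notin S$.
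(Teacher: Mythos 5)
Your proposal follows essentially the same route as the paper: both directions hinge on identifying the operator-valued kernels $K_i(x_i,y_i)$ with the partial kernels $K^{\{i\}}_{f_{V^c},g_{V^c}}(x_i,y_i)$, verifying the WBP/BMO hypotheses for these slices from the conditions on the constants $C^V_S$, and invoking Corollary \ref{Cor} at the lower parameter levels to obtain the $\delta CZ$-norm bounds $\lesssim |x_i-y_i|^{-d_i}$ (the paper works out $n=3$ directly, applying Corollary \ref{Cor} at $n=1$ and $n=2$, which is exactly the inductive descent you make explicit). The converse direction likewise matches the paper's use of Journ\'e's $L^\infty\to BMO_{prod}$ result and the vector-valued kernel estimates to control the constants $C^V$.
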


\begin{proof}
We will prove the case $n=3$ as an example, which is enough to show the new multi-parameter phenomena in the problem. And for simplicity of notations, let's assume that the dimensions $d_1=d_2=d_3=1$. To remind ourselves, $T$ is a Journ\'e type tri-parameter SIO if there exists a triple $(K_1, K_2, K_3)$ of $\delta CZ(\R\times\R)$-$\delta$-standard kernels such that
\begin{equation}
\pair{T(f_1\otimes f_2\otimes f_3)}{g_1\otimes g_2\otimes g_3}=\int f_1(y_1)\pair{K_1(x_1,y_1)f_2\otimes f_3}{g_2\otimes g_3}g_1(x_1)\,dx_1dy_1
\end{equation}
when $\text{spt}f_1\cap\text{spt}g_1=\emptyset$, and similarly for $K_2, K_3$. 

It is important to keep in mind that for any fixed $x_1, y_1$, $K_1(x_1,y_1)$ is a Journ\'e type bi-parameter SIO on $\R\times\R$.

To show that any Journ\'e type tri-parameter SIO $T$ satisfies our full and partial kernel assumptions, one can basically follow the strategy in \cite{Gr}, and note that no $L^2$ boundedness is needed. Due to the symmetries of the conditions, it suffices to check the kernel assumptions for $T$ while the results for other $T_S$ follow similarly. The full kernel assumptions are straightforward to verify, which we omit. For the partial kernel assumptions, let's look at the most difficult case $V=\{1\}$ as an example, while all the other cases follow similarly and symmetrically.

For any $\text{spt}f_1\cap\text{spt}g_1=\emptyset$, since $T$ is a Journ\'e type operator, we have
\[
\pair{T(f_1\otimes f_2\otimes f_3)}{g_1\otimes g_2\otimes g_3}=\int f_1(y_1)\pair{K_1(x_1,y_1)f_2\otimes f_3}{g_2\otimes g_3} g_1(x_1)\,dx_1dy_1.
\]
Define partial kernel $K^{\{1\}}_{f_2\otimes f_3,g_2\otimes g_3}(x_1,y_1):=\pair{K_1(x_1,y_1)f_2\otimes f_3}{g_2\otimes g_3}$. Then the mixed size-H\"older conditions are implied by the fact that $K_1(x_1,y_1)$ is a $\delta CZ(\R\times\R)$-$\delta$-standard kernel. Let's first look at the standard kernel estimates and the boundedness of constant $C^{\{1\}}(1\otimes 1,\cdot)$. Since $K_1(x_1,y_1)$ maps $L^\infty(\R\times\R)$ boundedly into $BMO_{prod}(\R\times\R)$ with an operator norm bounded by $\|K_1(x_1,y_1)\|_{\delta CZ(\mathbb{R}\times\mathbb{R})}$, a result proved by Journ\'e in \cite{Jo}. $K^{\{1\}}_{1\otimes 1,g_{23}}$ is thus well defined for any function $g_{23}\in H^1(\mathbb{R}\times\mathbb{R})$, not necessarily to be a tensor product.

Then in order to prove the size condition, one writes
\[
|K^{\{1\}}_{1\otimes 1,g_{23}}(x_1,y_1)|=|\pair{K_1(x_1,y_1)1\otimes 1}{g_{23}}|\lesssim\|K_1(x_1,y_1)\|_{\delta CZ(\R\times\R)},
\]
where $\|g_{23}\|_{H^1(\R\times\R)}\leq 1$. Hence, by the vector-valued standard kernel assumption of $K_1(x_1,y_1)$,
\[
|K^{\{1\}}_{1\otimes 1,g_{23}}(x_1,y_1)|\leq C^{\{1\}}(1\otimes 1,g_{23})\frac{1}{|x_1-y_1|},
\]
where $C^{\{1\}}(1\otimes 1,g_{23})$ is some constant universally bounded.

For H\"older conditions, one can similarly write
\[
\begin{split}
&|K^{\{1\}}_{1\otimes 1,g_{23}}(x_1,y_1)-K^{\{1\}}_{1\otimes 1,g_{23}}(x_1',y_1)|=|\pair{(K_1(x_1,y_1)-K_1(x_1',y_1))1\otimes 1}{g_{23}}|\\
&\lesssim \|K_1(x_1,y_1)-K_1(x_1',y_1)\|_{\delta CZ(\mathbb{R}\times\mathbb{R})}\lesssim C^{\{1\}}(1\otimes 1,g_{23})\frac{|x_1-x_1'|^{\delta}}{|x_1-y_1|^{1+\delta}},
\end{split}
\]
where the constant $C^{\{1\}}(1\otimes 1,g_{23})$ is the same as before. This completes the proof of the standard kernel estimates and the BMO condition of $C^{\{1\}}(1\otimes 1,\cdot)$ as well.

To prove the bounds for $C^{\{1\}}(\chi_{I_2}\otimes 1,\chi_{I_2}\otimes h)$ ($h$ being an atom of $H^1(\mathbb{R})$ adapted to cube $V$), for simplicity we only verify the size condition as the H\"older conditions are similar. Split
\[
\begin{split}
&K^{\{1\}}_{\chi_{I_2}\otimes 1,\chi_{I_2}\otimes h}(x_1,y_1)=\pair{K_1(x_1,y_1)\chi_{I_2}\otimes 1}{\chi_{I_2}\otimes h}\\
&=\pair{K_1(x_1,y_1)\chi_{I_2}\otimes \chi_{3V}}{\chi_{I_2}\otimes h}+\pair{K_1(x_1,y_1)\chi_{I_2}\otimes \chi_{(3V)^c}}{\chi_{I_2}\otimes h}=: I+II.
\end{split}
\] 

The first term can be estimated using $L^2$ bounds:
\[
|I|\leq\|K_1(x_1,y_1)\|_{\delta CZ(\mathbb{R}\times\mathbb{R})}\|\chi_{I_2}\otimes\chi_{3V}\|_2\|\chi_{I_2}\otimes h\|_2\lesssim\|K_1(x_1,y_1)\|_{\delta CZ(\mathbb{R}\times\mathbb{R})}|I_2|.
\]

For the second term, noticing that $\chi_{(3V)^c}$ and $h$ are disjointly supported, by the definition of bi-parameter Journ\'e type CZO, there exists Calder\'on-Zygmund operator $K_1^3(x_1,y_1,x_3,y_3)$ such that
\[
II=\int\chi_{(3V)^c}(y_3)\pair{K_1^3(x_1,y_1,x_3,y_3)\chi_{I_2}}{\chi_{I_2}}h(x_3)\,dx_3dy_3,
\]
which by the vector-valued standard kernel estimate equals
\[
\begin{split}
&=\int\chi_{(3V)^c}(y_3)\pair{[K_1^3(x_1,y_1,x_3,y_3)-K_1^3(x_1,y_1,x_3,c(V))]\chi_{I_2}}{\chi_{I_2}}h(x_3)\,dx_3dy_3\\
&\leq |I_2|\int|\chi_{(3V)^c}(y_3)h(x_3)|\|K_1^3(x_1,y_1,x_3,y_3)-K_1^3(x_1,y_1,x_3,c(V))\|_{\delta CZ(\mathbb{R})}\,dx_3dy_3\\
&\leq |I_2|\|K_1(x_1,y_1)\|_{\delta CZ(\mathbb{R}\times\mathbb{R})}\int|\chi_{(3V)^c}(y_3)h(x_3)|\frac{\ell(V)^\delta}{d(y_3,V)^{1+\delta}}\,dx_3dy_3\\
&\lesssim |I_2|\|K_1(x_1,y_1)\|_{\delta CZ(\mathbb{R}\times\mathbb{R})}.
\end{split}
\]

One thus has the size condition
\[
|K^{\{1\}}_{\chi_{I_2}\otimes 1,\chi_{I_2}\otimes h}(x_1,y_1)|\lesssim C^{\{1\}}(\chi_{I_2}\otimes 1,\chi_{I_2}\otimes h)\frac{1}{|x_1-y_1|},
\]
where the constant is taken so that $C^{\{1\}}(\chi_{I_2}\otimes 1,\chi_{I_2}\otimes h)\lesssim |I_2|$, hence satisfies the desired BMO estimate.

Lastly, the estimate of $C^{\{1\}}(\chi_{I_2}\otimes\chi_{I_3},\chi_{I_2}\otimes\chi_{I_3})$ can be proved similarly solely based on the $L^2$ boundedness of $K_1(x_1,y_1)$, which completes the easy direction of the proof of Theorem \ref{equiv}.

To prove the other direction, for any given tri-parameter operator $T$, together with all of its partial adjoints satisfying the full and partial kernel assumptions, we are going to prove that it is a Journ\'e type SIO, i.e. there exist $\delta CZ(\R\times\R)$-$\delta$-standard kernels $K_1,K_2$ and $K_3$. By symmetry, it suffices to show the existence of $K_1$.

For any $\text{spt}f_1\cap\text{spt}g_1=\emptyset$, there holds for some partial kernel $K^{\{1\}}_{f_2\otimes f_3,g_2\otimes g_3}$ that
\[
\pair{T(f_1\otimes f_2\otimes f_3)}{g_1\otimes g_2\otimes g_3}=\int K^{\{1\}}_{f_2\otimes f_3,g_2\otimes g_3}(x_1,y_1)f_1(y_1)g_1(x_1)\,dx_1dy_1.
\]
This suggests us to define a bi-parameter operator $K_1(x_1,y_1)$ associated with the following bilinear form:
\[
\pair{K_1(x_1,y_1)f_2\otimes f_3}{g_2\otimes g_3}:=K^{\{1\}}_{f_2\otimes f_3,g_2\otimes g_3}(x_1,y_1).
\]
It is left to prove that $K_1(x_1,y_1)$ is a Journ\'e type $\delta$-CZO on $\R\times\R$ and satisfies the standard kernel estimates. For the sake of brevity, we will focus only on the size condition, i.e. to show that $\|K_1(x_1,y_1)\|_{\delta CZ(\R\times\R)}\lesssim |x_1-y_1|^{-1}$.

For any fixed $x_1,y_1$, the fact that $K_1(x_1,y_1)$ defined above is indeed a linear continuous mapping follows from the linearity and continuity of $T$ itself, with the aid of Lebesgue differentiation theorem. 

To see that $K_1(x_1,y_1)$ is a Journ\'e type bi-parameter $\delta$-SIO, according to the definition, we need to show the existence of a pair $(K^2_1(x_1,y_1, x_2,y_2),K^3_1(x_1,y_1,x_3,y_3))$ of $\delta CZ$-$\delta$-standard kernels such that
\begin{equation}
\begin{split}
&\pair{K_1(x_1,y_1)f_2\otimes f_3}{g_2\otimes g_3}=K^{\{1\}}_{f_2\otimes f_3,g_2\otimes g_3}(x_1,y_1)\\
&=\int f_2(y_2)\pair{K_1^2(x_1,y_1,x_2,y_2)f_3}{g_3}g_2(x_2)\,dx_2dy_2
\end{split}
\end{equation}
when $\text{spt}f_2\cap\text{spt}g_2=\emptyset$;
\begin{equation}
\begin{split}
&\pair{K_1(x_1,y_1)f_2\otimes f_3}{g_2\otimes g_3}=K^{\{1\}}_{f_2\otimes f_3,g_2\otimes g_3}(x_1,y_1)\\
&=\int f_3(y_3)\pair{K_1^3(x_1,y_1,x_3,y_3)f_2}{g_2}g_3(x_3)\,dx_3dy_3
\end{split}
\end{equation}
when $\text{spt}f_3\cap\text{spt}g_3=\emptyset$, and the bound $\|K_1^i(x_1,y_1,x_i,y_i)\|_{\delta CZ}\lesssim |x_1-y_1|^{-1}$ for $i=2,3$.

The existence of $K_1^2,K_1^3$ follows from another partial kernel assumption. Let's take $K_1^2$ as an example. When $\text{spt}f_i\cap\text{spt}g_i=\emptyset$ for $i=1,2$,
\[
\begin{split}
&\pair{T(f_1\otimes f_2\otimes f_3)}{g_1\otimes g_2\otimes g_3}\\
&=\int K^{\{1,2\}}_{f_3,g_3}(x_1,y_1,x_2,y_2)f_1(y_1)f_2(y_2)g_1(x_1)g_2(x_2)\,dx_1dx_2dy_1dy_2\\
&=\int K^{\{1\}}_{f_2\otimes f_3,g_2\otimes g_3}(x_1,y_1)f_1(y_1)g_1(x_1)\,dx_1dy_1.
\end{split}
\]

By Lebesgue differentiation, this implies
\[
\begin{split}
&\pair{K_1(x_1,y_1)f_2\otimes f_3}{g_2\otimes g_3}=K^{\{1\}}_{f_2\otimes f_3,g_2\otimes g_3}(x_1,y_1)\\
&=\int K^{\{1,2\}}_{f_3,g_3}(x_1,y_1,x_2,y_2)f_2(y_2)g_2(x_2)\,dx_2dy_2.
\end{split}
\]
It thus natural to define $\pair{K_1^2(x_1,y_1,x_2,y_2)f_3}{g_3}:=K^{\{1,2\}}_{f_3,g_3}(x_1,y_1,x_2,y_2)$.

We next prove $\|K_1^2(x_1,y_1,x_2,y_2)\|_{\delta CZ}\lesssim |x_1-y_1|^{-1}|x_2-y_2|^{-1}$, which is the pure size estimate, and the mixed size-H\"older estimates follow similarly.

First, one can easily check that operator $K_1^2(x_1,y_1,x_2,y_2)$ is associated with the kernel $K(x_1,y_2,x_2,y_2,\cdot,\cdot)$, which is standard with the correct norm because of the mixed size-H\"older conditions in the full kernel assumption. It thus suffices to prove that $\|K_1^2(x_1,y_1,x_2,y_2)\|_{L^2\rightarrow L^2}\lesssim |x_1-y_1|^{-1}|x_2-y_2|^{-1}$, which will follow from Corollary \ref{Cor} in the case $n=1$ provided that $K_1^2(x_1,y_1,x_2,y_2)$ satisfies the BMO/WBP properties. (This is exactly the classical $T1$ theorem, rephrased in our setting.)

To see this last piece of fact, note that for any normalized $H^1$ function $h$, any cube $I_3$ in the third variable,
\[
\begin{split}
|\pair{K_1^2(x_1,y_1,x_2,y_2)1}{h}|=|K^{\{1,2\}}_{1,h}(x_1,y_1,x_2,y_2)|&\lesssim C^{\{1,2\}}(1,h)\frac{1}{|x_1-y_1|}\frac{1}{|x_2-y_2|}\\&\lesssim \frac{1}{|x_1-y_1|}\frac{1}{|x_2-y_2|},
\end{split}
\]
and
\[
\begin{split}
&|\pair{K_1^2(x_1,y_1,x_2,y_2)\chi_{I_3}}{\chi_{I_3}}|=|K^{\{1,2\}}_{\chi_{I_3},\chi_{I_3}}(x_1,y_1,x_2,y_2)|\\
&\lesssim C^{\{1,2\}}(\chi_{I_3},\chi_{I_3})\frac{1}{|x_1-y_1|}\frac{1}{|x_2-y_2|}\lesssim |I_3|\frac{1}{|x_1-y_1|}\frac{1}{|x_2-y_2|},
\end{split}
\]
which are the BMO/WBP assumptions when $n=1$. This demonstrates that $K_1(x_1,y_1)$ is a Journ\'e type bi-parameter $\delta$-SIO on $\R\times \R$.

Now the only gap left in the proof of Theorem \ref{equiv} is to show that as a bi-parameter operator, 
\begin{equation}\label{multi}
\|K_1(x_1,y_1)\|_{L^2\rightarrow L^2}\lesssim\frac{1}{|x_1-y_1|},
\end{equation}
together with the same bound for its partial adjoint. We omit the proof of the partial adjoint part as it follows from the same argument by changing $T$ to its corresponding partial adjoint from the beginning.

The proof of (\ref{multi}) is exactly where the multi-parameter version of Corollary \ref{Cor} comes into play, as we are in need of a multi-parameter $T1$ type theorem of its full strength. It thus suffices to demonstrate that $K_1(x_1,y_1)$ is a bi-parameter singular integral satisfying our full and partial kernel assumptions, as well as the additional BMO/WBP assumptions with the required norm. Note that without loss of generality, we are free to discuss $K_1(x_1,y_1)$ itself only, as the similar results for its partial adjoints will follow from the symmetry of the assumptions on $T$.

To demonstrate the full kernel assumption, noticing that $K_1(x_1,y_1)$ is associated with kernel $K(x_1,y_1,\cdot,\cdot,\cdot,\cdot)$, then it's not hard to check all the mixed size-H\"older conditions of the kernel.

For the partial kernel assumption, when $\text{spt}f_2\cap\text{spt}g_2=\emptyset$, observe that
\[
\pair{K_1(x_1,y_1)f_2\otimes f_3}{g_2\otimes g_3}=\int K^{\{1,2\}}_{f_3,g_3}(x_1,y_1,x_2,y_2)f_2(y_2)g_2(x_2)\,dx_2dy_2.
\]
Then, the partial kernel $K^{\{1,2\}}_{f_3,g_3}$ satisfies the collection of mixed size-H\"older conditions with a constant bounded by $C^{\{1,2\}}(f_3,g_3)|x_1-y_1|^{-1}$. And for any normalized $H^1$ function $h$ and any cube $I_3$,
\[
C^{\{1,2\}}(1,h)\lesssim 1,\quad C^{\{1,2\}}(\chi_{I_3},\chi_{I_3})\lesssim |I_3|.
\]
The H\"older estimate for the partial kernel follows similarly.

It's thus left to check the BMO/WBP assumptions. This will also follow from the partial kernel assumptions of $T$. First, for any dyadic grids $\mathcal{D}_2,\mathcal{D}_3$ and open set $\Omega\subset\mathbb{R}\times\mathbb{R}$ with finite measure, since
\[
|\pair{K_1(x_1,y_1)1\otimes 1}{h_{J_2}\otimes h_{J_3}}|=|K^{\{1\}}_{1\otimes 1,h_{J_2}\otimes h_{J_3}}(x_1,y_1)|\lesssim C^{\{1\}}(1\otimes 1,h_{J_2}\otimes h_{J_3})\frac{1}{|x_1-y_1|},
\]
there holds
\[
\begin{split}
&\frac{1}{|\Omega|}\sum_{\substack{R\subset\Omega,R\in\mathcal{D}_2\times\mathcal{D}_3\\R=J_2\times J_3}}|\pair{K_1(x_1,y_1)1\otimes 1}{h_{J_2}\otimes h_{J_3}}|^2\\
&\lesssim \frac{1}{|x_1-y_1|}\frac{1}{|\Omega|}\sum_{\substack{R\subset\Omega,R\in\mathcal{D}_2\times\mathcal{D}_3\\R=J_2\times J_3}}|C^{\{1\}}(1\otimes 1,h_{J_2}\otimes h_{J_3})|^2\\
&\lesssim \frac{1}{|x_1-y_1|}.
\end{split}
\]

The last inequality above follows from the fact that $C^{\{1\}}(1\otimes 1)$ is a product BMO function with norm $\lesssim 1$.
To verify that other BMO/WBP assumptions hold true, for any normalized $H^1(\R)$ function $h_3$ and cubes $I_2,I_3$, in the second and third variable respectively, observe that
\[
|\pair{K_1(x_1,y_1)\chi_{I_2}\otimes\chi_{I_3}}{\chi_{I_2}\otimes\chi_{I_3}}|\lesssim C^{\{1\}}(\chi_{I_2}\otimes\chi_{I_3},\chi_{I_2}\otimes\chi_{I_3})\frac{1}{|x_1-y_1|}\lesssim |I_2||I_3|\frac{1}{|x_1-y_1|},
\]
\[
|\pair{K_1(x_1,y_1)\chi_{I_2}\otimes 1}{\chi_{I_2}\otimes h_3}|\lesssim C^{\{1\}}(\chi_{I_2}\otimes 1,\chi_{I_2}\otimes h_3)\frac{1}{|x_1-y_1|}\lesssim |I_2|\frac{1}{|x_1-y_1|}.
\]

Hence, applying Corollary \ref{Cor} in the case $n=2$ will complete the proof.
\end{proof}

\begin{rem}
Note that when the number of parameters goes up, in order to prove Theorem \ref{equiv}, we have to use Corollary \ref{Cor} for arbitrarily many parameters, which is one of the applications of our $n$-parameter representation theorem for $n\geq 3$.
\end{rem}

Once we have Theorem \ref{equiv}, it is natural to obtain the following characterization of Journ\'e type $n$-parameter $\delta$-CZO as well.

\begin{cor}
$T$ is a Journ\'e type $n$-parameter $\delta$-CZO if and only if it is an $n$-parameter CZO defined in Section \ref{Assump}. 
\end{cor}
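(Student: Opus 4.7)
The plan is to combine Theorem \ref{equiv} with a direct comparison of the $L^2$ boundedness conditions that distinguish a CZO from an SIO in the two definitions, so that the corollary reduces to a bookkeeping step about partial adjoints. Theorem \ref{equiv} already identifies our class of $n$-parameter SIOs with Journ\'e's class of $n$-parameter SIOs, hence only the boundedness halves of the two ``CZO'' definitions need to be matched.

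First I would unpack Journ\'e's $n$-parameter $\delta$-CZO definition, obtained inductively from Definition \ref{jsio}. In the bi-parameter case, the CZO requires $L^2$ boundedness of $T$ and of the single partial adjoint $T_1$; this automatically gives boundedness of $T_2 = (T_1)^*$ and of $T^* = T_{\{1,2\}}$, so Journ\'e's boundedness condition is equivalent to the boundedness of $T_S$ for every $S \subset \{1,2\}$. Iterating the induction to $n$ parameters, the collection of partial adjoints that Journ\'e's definition asks to be $L^2$ bounded, closed under taking full adjoints via the identity $T_S^* = T_{S^c}$ (which follows directly from the definition of partial adjoint in Section \ref{Assump}), should produce exactly the family $\{T_S : S \subset \{1,\ldots,n\}\}$ appearing in our own definition of $n$-parameter CZO.

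For the forward implication, assuming $T$ is a Journ\'e type $n$-parameter $\delta$-CZO, Theorem \ref{equiv} immediately supplies the full and partial kernel assumptions of Section \ref{Assump}, while the identification of the two boundedness families upgrades Journ\'e's hypotheses to the boundedness of every $T_S$. For the reverse implication, if $T$ is an $n$-parameter CZO in our sense, Theorem \ref{equiv} delivers Journ\'e's SIO structure, and the assumption that each $T_S$ is bounded on $L^2$ a fortiori provides the subcollection of boundedness statements demanded by Journ\'e's inductive CZO definition.

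The main, and essentially only, non-routine step is the bookkeeping in the second paragraph: I need to verify carefully that the partial adjoints appearing in Journ\'e's inductive definition, after being closed under full adjoints, cover precisely the family $\{T_S : S \subset \{1,\ldots,n\}\}$. This will be a purely combinatorial check using $T_S^* = T_{S^c}$ together with the obvious compositional behavior of the variable-swap operation $S \mapsto T_S$, and it carries no analytic content whatsoever; in particular, no new multi-parameter ingredient beyond Theorem \ref{equiv} is needed.
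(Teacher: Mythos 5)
Your overall architecture coincides with the paper's: invoke Theorem \ref{equiv} to dispose of the SIO part, then match the two boundedness packages. The gap is in the step you yourself single out as ``the main, and essentially only, non-routine step'' and then dismiss as ``purely combinatorial'' with ``no analytic content.'' The closure of Journ\'e's top-level boundedness requirements under full adjoints via $(T_S)^* = T_{S^c}$ does not produce the whole family $\{T_S : S\subset\{1,\dots,n\}\}$. If the inductive definition demands, at the top level, boundedness of $T$ and of the single-variable partial adjoints (the most generous reading of the literal analogue of Definition \ref{jsio}), then closing under complementation yields only $\{S : |S|\le 1\}\cup\{S : |S|\ge n-1\}$; every $S$ with $2\le |S|\le n-2$ is missed, so the bookkeeping already fails at $n=4$. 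If, as in the verbatim bi-parameter definition, only $T$ and $T_{\{1\}}$ are required, it fails at $n=3$ (you get $4$ of the $8$ partial adjoints). So the combinatorics alone cannot close the argument.

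The boundedness of the intermediate partial adjoints is encoded in Journ\'e's definition not at the top level but through the induction: the $\delta CZ$-valued kernels $K_i(x_i,y_i)$ are required to be $(n-1)$-parameter Journ\'e CZOs, whose own norms carry the $L^2$ bounds of \emph{their} partial adjoints, and so on down the induction. This is precisely the observation the paper's proof rests on (``in $(n-1)$-parameter, the partial kernels are always CZOs themselves''), and extracting the boundedness of $T_S$ for intermediate $S$ from these lower-parameter CZO-valued kernel hypotheses is an analytic step, not a relabeling. To repair your argument you would need either to unfold the inductive definition and show how the intermediate $T_S$ inherit boundedness from the lower-parameter CZO structure, or to route the missing cases through Corollary \ref{Cor} (the $T1$ theorem) applied to $T_S$, using the symmetry of the kernel and BMO/WBP hypotheses under partial adjoints.
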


\begin{proof}
As we have shown in Theorem \ref{equiv} that Journ\'e's and our classes of $n$-parameter SIO are equivalent. It is thus left to verify the equivalence between boundedness of all the partial adjoints of $T$. This can be shown directly from the inductive definition of Journ\'e type $n$-parameter CZO, observing that in $(n-1)$-parameter, the partial kernels are always CZOs themselves, satisfying the corresponding $L^2$ boundedness in $(n-1)$-parameter.
\end{proof}

Up to this point, we have successfully established a set of characterizing conditions for an operator to be a Journ\'e type $n$-parameter CZO. This is very useful in the study of multi-parameter operators since the full kernel, partial kernel, BMO/WBP conditions are usually much easier to verify and used compared with Journ\'e's original vector-valued formulation.

\section{Some discussion of the necessity of the BMO/WBP conditions}
Given an $n$-parameter singular integral operator $T$ satisfying both full and partial kernel assumptions, one might ask if the mixed BMO/WBP conditions are necessary for $T$ to be bounded on $L^2(\mathbb{R}^{\vec{d}})$. The answer is yes when $n=1$, which is a classical result of Calder\'on-Zygmund operators, but is no for $n\geq 2$. In fact, a counterexample has been constructed in \cite{Jo} to show that in the bi-parameter setting, $T_1 1$ and $T_1^* 1\in BMO$ are not necessary conditions for $T$ to be bounded on $L^2$. 

However, one can indeed prove the necessity of some of the mixed BMO/WBP conditions, more specifically, those that are formulated for $T$ and $T^*$. It is straightforward to verify that pure WBP, i.e.
\[
|\pair{T(\chi_{I_1}\otimes\cdots\otimes\chi_{I_n})}{\chi_{I_1}\otimes\cdots\otimes\chi_{I_n}}|\lesssim \prod_{i=1}^n|I_i|
\]
is directly implied by the $L^2$ boundedness of $T$. For the pure BMO conditions: $T1, T^*1\in BMO$, the necessity is first pointed out in \cite{Gr} for bi-parameters, and is not hard to extend to arbitrarily many parameters using Theorem \ref{equiv}. To see this, suppose that there is a $L^2$ bounded $n$-parameter SIO satisfying full and partial kernel assumptions. By Theorem \ref{equiv}, $T$ is also a Journ\'e type $n$-parameter SIO who is bounded on $L^2$. Hence, Theorem $3$ in \cite{Jo} implies that $T1\in BMO$, as well as $T^* 1\in BMO$ taking into account that $T^*$ is also $L^2$ bounded.

To prove that for operator $T$ given above, there also hold the mixed BMO/WBP conditions for $T, T^*$, we take a look at the tri-parameter, $d_1=d_2=d_3=1$ case as an example. In other words, one wants to show that
\begin{equation}\label{necess}
\|\pair{T(\chi_{I_1}\otimes 1\otimes 1)}{\chi_{I_1}\otimes\cdot}\|_{BMO(\mathbb{R}\times\mathbb{R})}\lesssim |I_1|,
\end{equation}
\begin{equation}\label{necess2}
\|\pair{T(\chi_{I_1}\otimes\chi_{I_2}\otimes 1)}{\chi_{I_1}\otimes\chi_{I_2}\otimes\cdot}\|_{BMO(\mathbb{R})}\lesssim |I_1||I_2|,
\end{equation}
and all the other mixed BMO/WBP conditions formulated for $T$ will follow symmetrically, so are the ones for $T^*$.

In order to prove (\ref{necess}), for any cube $I_1$, one can define an operator $\pair{T^1\chi_{I_1}}{\chi_{I_1}}$ mapping $C_0^\infty(\mathbb{R})\otimes C_0^\infty(\mathbb{R})$ to its dual:
\[
\pair{\pair{T^1\chi_{I_1}}{\chi_{I_1}}f_2\otimes f_3}{g_2\otimes g_3}:=\pair{T(\chi_{I_1}\otimes f_2\otimes f_3)}{\chi_{I_1}\otimes g_2\otimes g_3}.
\]
By taking one parameter away, it is easy to see that $\pair{T^1\chi_{I_1}}{\chi_{I_1}}$ is a bi-parameter SIO, whose full kernel is $K^{\{2,3\}}_{\chi_{I_1},\chi_{I_1}}(x_2,x_3,y_2,y_3)$ with norm bounded by
\[
C^{\{2,3\}}(\chi_{I_1},\chi_{I_1})\lesssim |I_1|,
\]
while the partial kernel assumptions can be verified similarly. Moreover, following from the definition of $\pair{T^1\chi_{I_1}}{\chi_{I_1}}$ and the $L^2$ boundedness of $T$, one can conclude that $\pair{T^1\chi_{I_1}}{\chi_{I_1}}$ is a $L^2$ bounded bi-parameter Journ\'e type SIO with norm $\lesssim |I_1|$, thus maps $1\otimes 1$ boundedly into $BMO(\mathbb{R}\times\mathbb{R})$, which proves (\ref{necess}).

Using the same strategy, it is not hard to demonstrate $(\ref{necess2})$ by slicing two parameters away and apply the $L^\infty\rightarrow BMO$ estimate for Calder\'on-Zygmund operators. We omit the details.

This, together with the discussion at the end of section \ref{theorem}, leads us to the following characterizing result of the class of $n$-parameter CZO.

\begin{cor}
Given an $n$-parameter singular integral operator $T$ satisfying both full and partial kernel assumptions, it is then an $n$-parameter CZO if and only if the mixed BMO/WBP assumptions hold true.
\end{cor}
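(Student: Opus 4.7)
Proof proposal. The biconditional splits naturally along the two implications, and both halves have largely been staged by the material immediately preceding the corollary; the task is to assemble them cleanly.

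For the direction BMO/WBP $\Rightarrow$ CZO, the plan is to invoke Corollary \ref{Cor} directly. The full plus partial kernel assumptions are symmetric in the sense that they hold for every partial adjoint $T_S$ simultaneously (this is how the class is defined in section \ref{Assump}). Likewise, once the BMO/WBP assumption is verified for $T$, verifying it for each $T_S$ is automatic from the same symmetry of the tensor-product test functions used in the definition. Applying Corollary \ref{Cor} to each $T_S$ then gives $T_S:L^2\to L^2$ for all $S\subset\{1,\ldots,n\}$, which is exactly the defining property of an $n$-parameter CZO.

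For the direction CZO $\Rightarrow$ BMO/WBP, the plan is to combine three ingredients already in place: (i) pure WBP from Cauchy--Schwarz and the $L^2$ bound; (ii) pure BMO ($T1, T^*1\in BMO_{prod}$) from Theorem \ref{equiv} together with Journ\'e's theorem in \cite{Jo}; (iii) the mixed conditions via the slicing construction sketched just above the corollary for the cases $(\ref{necess})$ and $(\ref{necess2})$. For a general mixed condition indexed by a subset $W\subset\{1,\ldots,n\}$ and cubes $I_i$ for $i\in W$, I would define a lower-parameter operator $T^W_{\otimes_{i\in W}\chi_{I_i}}$ acting on $C_0^\infty(\otimes_{i\in W^c}\R^{d_i})$ by
\[
\langle T^W_{\otimes_{i\in W}\chi_{I_i}}(f_{W^c}),g_{W^c}\rangle := \langle T((\otimes_{i\in W}\chi_{I_i})\otimes f_{W^c}),(\otimes_{i\in W}\chi_{I_i})\otimes g_{W^c}\rangle,
\]
which by the partial kernel assumptions of $T$ (with $V=W^c$, and with $V$ replaced by proper subsets thereof) is itself an $(n-|W|)$-parameter singular integral operator in our class, with full/partial kernel norms bounded by $\prod_{i\in W}|I_i|$. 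Since $T$ is $L^2$-bounded, a direct computation gives $\|T^W_{\otimes\chi_{I_i}}\|_{L^2\to L^2}\lesssim \prod_{i\in W}|I_i|$, so it is in fact a lower-parameter CZO. Invoking the pure BMO statement (ingredient (ii)) at parameter count $n-|W|$ then gives $T^W_{\otimes\chi_{I_i}}(\otimes 1)\in BMO_{prod}$ with the right normalization, which is exactly the mixed BMO/WBP estimate for $T$ at subset $W$.

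The main obstacle I anticipate is the bookkeeping of step (iii): one needs to check that the sliced operator $T^W_{\otimes\chi_{I_i}}$ inherits \emph{all} of the full/partial kernel estimates in the remaining $n-|W|$ parameters, including the mixed H\"older/size ones, with the constant $\prod_{i\in W}|I_i|$. This is where the particular formulation of the partial kernel assumptions pays off, since the constants $C^V_S$ were designed precisely so that testing against $\otimes_{i\in W}\chi_{I_i}$ in the $W$-variables produces a well-behaved operator in the remaining variables; one has to unpack the definitions and verify the compatibility for each pair (kernel type in $W^c$, indicator test in $W$). Once this verification is carried out, the induction on parameter count combined with Theorem \ref{equiv} closes the argument, exactly as illustrated for $(\ref{necess})$ and $(\ref{necess2})$ in the tri-parameter case.
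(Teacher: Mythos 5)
Your overall architecture matches the paper's: the sufficiency direction is Corollary \ref{Cor} applied to every partial adjoint, and the necessity direction combines pure WBP from Cauchy--Schwarz, pure BMO from Theorem \ref{equiv} plus Journ\'e's $L^\infty\to BMO_{prod}$ theorem, and the mixed conditions via the slicing construction generalizing \eqref{necess} and \eqref{necess2}. Your general operator $T^W_{\otimes_{i\in W}\chi_{I_i}}$ is exactly the right generalization of the paper's $\pair{T^1\chi_{I_1}}{\chi_{I_1}}$, and the verification you flag as the "main obstacle" is indeed the content of that step.

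There is, however, one genuine error in your write-up: the claim that once the BMO/WBP conditions are verified for $T$, "verifying it for each $T_S$ is automatic from the same symmetry." It is not. The BMO/WBP package, as used in Theorem \ref{Repre}, consists of conditions imposed on \emph{every} partial adjoint $T_S$, and the conditions for $T_S$ do not follow from those for $T$ and $T^*$: the paper explicitly recalls Journ\'e's counterexample of an $L^2$-bounded bi-parameter operator with $T_1 1\notin BMO$, which is precisely a failure of this kind of "automatic symmetry." This affects both of your directions. In the sufficiency direction the issue is harmless only because the hypothesis of the corollary should be read as the conditions holding for all $T_S$ (so there is nothing to transfer). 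In the necessity direction your slicing argument, run on $T$ alone, only yields the conditions formulated for $T$ and $T^*$; to obtain the full set you must repeat the entire argument with $T$ replaced by each $T_S$. This is legitimate --- and is where the full strength of the CZO hypothesis is used --- because by definition of the class each $T_S$ satisfies the same full and partial kernel assumptions, and by definition of an $n$-parameter CZO each $T_S$ is bounded on $L^2$, so each $T_S$ is itself an $L^2$-bounded SIO to which the slicing and Journ\'e's theorem apply. With that one correction the proof closes; without it, the argument as stated proves strictly less than the corollary asserts and would appear to contradict Journ\'e's counterexample.
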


To end the paper, we state the following result and sketch the proof, which indicates the generality of our operator class and its inductive intrincity. Moreover, it also shows that although our class of operators has been proven to be equivalent to Journ\'e's, its mixed type characterizing conditions still provide us with a very helpful tool to study $n$-parameter operators, especially when $n$ is very large.

\begin{prop}
Let $T:=T_1\otimes T_2\otimes\cdots\otimes T_s$ be an operator on $\mathbb{R}^{\vec{d}}:=\mathbb{R}^{\vec{d_1}}\times\cdots\times\mathbb{R}^{\vec{d_s}}$, where for any $1\leq i\leq s$, $T_i$ is a $t_i$-parameter CZO on $\mathbb{R}^{\vec{d_i}}:=\mathbb{R}^{d_i^1}\times\cdots\times\mathbb{R}^{d_i^{t_i}}$. Then $T$ is an $n$-parameter CZO, where $n:=t_1+\cdots +t_s$.
\end{prop}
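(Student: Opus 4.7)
The plan is to verify the three defining properties of an $n$-parameter CZO (full kernel, partial kernel, and $L^2$-boundedness of all partial adjoints) by a block-by-block reduction to the corresponding properties of each factor $T_i$. Index the coordinates of $\R^{\vec{d}}$ by $\{1,\dots,n\}$ partitioned into blocks $B_1,\dots,B_s$ of sizes $t_1,\dots,t_s$ matching the tensor structure of $T$, and for any $S\subset\{1,\dots,n\}$ write $S_i:=S\cap B_i$. The tensor structure of $T$ gives $T_S=(T_1)_{S_1}\otimes\cdots\otimes(T_s)_{S_s}$; each factor is $L^2$-bounded because $T_i$ is a $t_i$-parameter CZO, and therefore so is $T_S$. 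Since the kernel assumptions are symmetric under partial adjoints, it suffices to verify them for $T$ itself.

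The full kernel of $T$ is simply $K(x,y)=\prod_{i=1}^s K_i(x_i,y_i)$. Given $W\subset\{1,\dots,n\}$, decompose $W=\bigsqcup_i W_i$ with $W_i=W\cap B_i$ and use the telescoping identity
\[
\sum_{\Lambda\subset W}(-1)^{|\Lambda|}K^\Lambda(x,x';y)=\prod_{i=1}^s\sum_{\Lambda_i\subset W_i}(-1)^{|\Lambda_i|}K_i^{\Lambda_i}(x_i,x'_i;y_i).
\]
Each factor on the right is controlled by the $W_i$-mixed size-H\"older bound satisfied by $K_i$, so the product satisfies the required $W$-mixed bound for $K$.

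For the partial kernel, fix a nonempty proper $V\subset\{1,\dots,n\}$ and set $V_i:=V\cap B_i$. Block-by-block there are three cases: (a) $V_i=B_i$, so $T_i$ is represented on block $i$ by its full kernel $K_i$; (b) $V_i=\emptyset$, so block $i$ contributes only the scalar $\pair{T_if_{B_i}}{g_{B_i}}$; (c) $V_i$ is nonempty proper in $B_i$, so $T_i$ is represented by its partial kernel $K_i^{V_i}$. Multiplying the three kinds of contributions and pulling the integral over the $V$-variables outside defines
\[
K^V_{T,f_{V^c},g_{V^c}}(x_V,y_V)=\prod_{i:(a)}K_i(x_i,y_i)\cdot\prod_{i:(b)}\pair{T_if_{B_i}}{g_{B_i}}\cdot\prod_{i:(c)}K^{V_i}_{i,f_{V_i^c},g_{V_i^c}}(x_{V_i},y_{V_i}),
\]
with associated constant $C^V_T(f_{V^c},g_{V^c})=\prod_{i:(b)}|\pair{T_if_{B_i}}{g_{B_i}}|\cdot\prod_{i:(c)}C_i^{V_i}(f_{V_i^c},g_{V_i^c})$. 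The mixed size-H\"older bounds for $K^V_T$ factor exactly as in the full-kernel case.

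The main obstacle is verifying the WBP/BMO bound for $C^V_T$. Given $W\subset V^c$ and cubes $I_j$ for $j\in W$, one plugs the specified tensor products of $\chi_{I_j}$'s and $1$'s into the $V^c$-slots; each type (b) factor then becomes $\pair{T_i\bigl((\otimes_{j\in W\cap B_i}\chi_{I_j})\otimes(\otimes 1)\bigr)}{(\otimes_{j\in W\cap B_i}\chi_{I_j})\otimes\cdot}$, which is a product-BMO function on the remaining $B_i$-slots with norm $\lesssim\prod_{j\in W\cap B_i}|I_j|$ by the BMO/WBP property of the $t_i$-parameter CZO $T_i$ (the necessity result established at the end of the paper), and each type (c) factor satisfies the analogous bound by the WBP/BMO hypothesis on $T_i$'s partial kernel constant. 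Combining via the standard fact that a tensor product of product-BMO functions lies in product BMO of the combined variables with norm bounded by the product of norms yields the required estimate $\prod_{j\in W}|I_j|$, completing the verification.
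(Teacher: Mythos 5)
Your proof is correct, and its skeleton matches the paper's: reduce to $T$ itself via $T_S=(T_1)_{S_1}\otimes\cdots\otimes(T_s)_{S_s}$, note that the full kernel is the product of the $K_i$ (your telescoping identity is a clean way to make the mixed size-H\"older bounds explicit), and assemble the partial kernel block by block from full kernels, partial kernels, and scalar pairings. Where you genuinely diverge is the WBP/BMO estimate for the constant $C^V_T$. The paper treats the leftover $V^c\setminus W$ variables \emph{jointly}: it views the tensor product of the remaining factors as a single lower-parameter CZO (slicing away the $W$-variables as in the proof of (6.1)) and invokes Journ\'e's $L^\infty\to BMO_{prod}$ theorem for that joint operator. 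You instead factor the constant block by block, appeal to the necessity of the mixed BMO/WBP conditions for each CZO factor $T_i$, and multiply. This is more modular, but it rests on the claim that a tensor product of product-BMO functions in disjoint groups of variables lies in product BMO of the combined variables with norm at most the product of the norms. That claim is true, but it is not safe to call it standard without justification, since product BMO is a Carleson condition over arbitrary open sets, not just product sets. It does hold by a slicing argument: for tensor coefficients $c_R d_{R'}$ and an open $\Omega$, fix $R$ and sum $d_{R'}$ over $\{R': R\times R'\subset\Omega\}$ to get $\lesssim |U_R|$ with $U_R:=\bigcup\{R': R\times R'\subset\Omega\}$; then $\sum_R c_R|U_R|=\int\sum_{R:\,y\in U_R}c_R\,dy\le\int\sum_{R\subset\Omega^y}c_R\,dy\lesssim\int|\Omega^y|\,dy=|\Omega|$ by Fubini, uniformly over dyadic grids, which suffices by the Pipher--Ward/Treil characterization quoted in Section \ref{Assump}. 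With that lemma supplied (and the trivial remark that one takes $\delta=\min_i\delta_i$ in the kernel estimates), your argument is complete.
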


\begin{proof}
Observing that the partial adjoints of $T$ can be expressed as tensor products of some partial adjoints of $T_i$, it suffices to prove that $T$ itself verifies the full and partial kernel assumptions, as the $L^2$ boundedness is straightforward.

The full kernel assumption is easy to see, since the tensor product of kernels of $T_i$ is the full kernel and satisfies all the mixed size-H\"older conditions.

To show the partial kernel assumptions, note that in any case, one can always write the partial kernel as a tensor product of some of the full or partial kernels of $T_i$. And the BMO condition for the constants follow from the fact that the tensor product of partial kernels are always CZO with less parameters, hence maps $L^\infty\rightarrow BMO$. To prove the mixed WBP/BMO conditions for the constants, one just needs to take away more parameters and mimic what we did in the proof of (\ref{necess}) earlier this section. We leave the details of the proof to the readers.
\end{proof}

\end{document}